\newtheorem{theorem}{Theorem}[section]
\newtheorem{lemma}[theorem]{Lemma}
\newtheorem*{conjecture}{Conjecture}
\newtheorem*{chen conjecture}{Chen's Conjecture}
\newtheorem*{generalized chen conjecture}{Generalized Chen's Conjecture}
\theoremstyle{definition}
\newtheorem{proposition}[theorem]{Proposition}
\theoremstyle{remark}
\numberwithin{equation}{section}
\begin{document}

\title[Four dimensional biharmonic hypersurfaces]{Four dimensional biharmonic hypersurfaces in nonzero space form have constant mean curvature}

\author{Zhida Guan}
\address{Department of Mathematical Sciences, Tsinghua University, Beijing 100084, People's Republic of China}
\email{gzd15@mails.tsinghua.edu.cn}

\author{Haizhong Li}
\address{Department of Mathematical Sciences, Tsinghua University, Beijing 100084, People's Republic of China}
\email{lihz@tsinghua.edu.cn}

\author{Luc Vrancken}
\address{LMI-Laboratoire de Math\'{e}matiques pour l'Ing\'{e}nieur, Universit\'{e} Polytechnique Hauts de France, 59313 Valenciennes, France,  and KU Leuven, Department of Mathematics, Leuven, Belgium}
\email{luc.vrancken@uphf.fr}

\subjclass[2010]{Primary 53C40, 58E20; Secondary 53C42.}
\keywords{Biharmonic maps, Biharmonic hypersurfaces, Constant mean curvature}

\begin{abstract}
In this paper, through making careful analysis of Gauss and Codazzi equations, we prove that four dimensional biharmonic hypersurfaces in nonzero space form have constant mean curvature. Our result gives the positive answer to the conjecture proposed by Balmus-Montaldo-Oniciuc in 2008 for four dimensional hypersurfaces.
\end{abstract}

\maketitle

\section{Introduction}

Biharmonic maps were introduced in 1964 by Eells and Sampson [ES] as a generalization of harmonic maps. In their paper, Eells and Sampson suggested considering the bi-energy of a map $\phi:(M^n,g)\rightarrow (N^m,h)$ between two Riemannian manifolds defined by
\begin{equation}
E_2(\phi)=\frac{1}{2}\int_{M^n} |\tau(\phi)|^2 d\mu_g,
\end{equation}
where $\tau(\phi)$ is the tension field of $\phi$ and $d\mu_g$ is the volume element on $(M^n,g)$. Stationary points of the bi-energy functional are called biharmonic maps. Jiang (see [J1], [J2]) is the first mathematician who systematically studied the bi-energy functional, and he computed the first and second variations of $E_2$. The stationary points of the functional $E_2$ satisfy the Euler-Lagrange equation
\begin{equation}
-\Delta\tau(\phi)=\sum_{i=1}^n R^{N^m}(d\phi(e_i),\tau(\phi))d\phi(e_i),
\end{equation}
where $\Delta$ is the Laplacian of $(M^n,g)$. Biharmonic submanifolds have attracted a lot of attentions from mathematicians and many important results on biharmonic submanifolds have been obtained since then (see [C1], [C2], [BMO1], [BMO2], [CMO1], [CMO2], [F1], [F2], [F3], [FH]).

The following conjecture was proposed by Balmus-Montaldo-Oniciuc in 2008 [BMO1] (also see Conjecture 7.2 of page 180 in [OC]).

\begin{conjecture}
Any $n$-dimensional biharmonic submanifold in $\mathbb{S}^{n+p}$ has constant mean curvature.
\end{conjecture}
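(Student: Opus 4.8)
The plan is to work directly with the biharmonic system for an isometric immersion $\phi : M^n \to \mathbb{S}^{n+p}$ and to show that the scalar mean curvature $f := \lvert \mathbf{H}\rvert$ is forced to be locally constant, where $\mathbf{H}$ denotes the mean curvature vector. Writing $\tau(\phi) = n\mathbf{H}$ and specializing the Euler--Lagrange equation to the unit sphere, biharmonicity of $M$ is equivalent to a splitting into a normal and a tangential part,
\begin{equation}
\Delta^\perp \mathbf{H} + \sum_{i=1}^n B\bigl(e_i, A_\mathbf{H} e_i\bigr) - n\,\mathbf{H} = 0, \qquad
n\,\mathrm{grad}\,f^2 + 4\,\mathrm{trace}\,A_{\nabla^\perp_{(\cdot)}\mathbf{H}}(\cdot) = 0,
\end{equation}
where $\Delta^\perp$ is the rough Laplacian of the normal bundle, $B$ the second fundamental form, $A_\mathbf{H}$ the shape operator in the direction $\mathbf{H}$, and $\{e_i\}$ a local orthonormal tangent frame. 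The goal is then to rule out the open set $U = \{\,p \in M : (\mathrm{grad}\,f)(p)\neq 0\,\}$.

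On $U$ I would adapt a normal frame by taking $\xi_{n+1} := \mathbf{H}/f$ as the first normal vector and completing it to an orthonormal normal frame $\{\xi_{n+1},\dots,\xi_{n+p}\}$. Contracting the normal equation with $\mathbf{H}$ and using $\langle B(X,Y),\mathbf{H}\rangle = \langle A_\mathbf{H} X, Y\rangle$ produces the Bochner-type scalar identity
\begin{equation}
\tfrac12\,\Delta f^2 \;=\; \lvert \nabla^\perp \mathbf{H}\rvert^2 \;-\; \lvert A_\mathbf{H}\rvert^2 \;+\; n\,f^2 ,
\end{equation}
while the component of the normal equation orthogonal to $\xi_{n+1}$ becomes a system relating the normal connection coefficients $\langle \nabla^\perp_{e_i}\xi_{n+1}, \xi_\alpha\rangle$ to the mixed shape operators $A_{\xi_\alpha}$. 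Expanding $\nabla^\perp_{e_i}\mathbf{H} = (e_i f)\xi_{n+1}+f\,\nabla^\perp_{e_i}\xi_{n+1}$ in the tangential equation ties the direction $\mathrm{grad}\,f$ to the shape operator $A_{n+1}:=A_{\xi_{n+1}}$, up to normal-connection corrections, which lets me diagonalize the tangential data along the distinguished direction $\mathrm{grad}\,f$, exactly as in the hypersurface analysis carried out for $n=4$ in this paper.

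The heart of the argument is then to feed these algebraic constraints into the integrability conditions. The Gauss equation controls the intrinsic Ricci curvature in terms of $\sum_\alpha A_\alpha$ and $\sum_\alpha A_\alpha^2$; the Codazzi equation controls the covariant derivatives $(\nabla_{e_i} A)_\alpha$ and couples them to the normal connection; and the Ricci equation expresses the normal curvature $R^\perp$ through the commutators $[A_\alpha, A_\beta]$. Differentiating the adapted frame relations and repeatedly substituting Codazzi should, after a finite but delicate bookkeeping, close the system and force $\mathrm{grad}\,f \equiv 0$ on $U$, contradicting its definition and yielding $f$ constant. When $M$ is compact one can shortcut part of this by integrating the scalar identity above, but the unconditional statement requires the local analysis on $U$.

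I expect the main obstacle to be precisely what separates arbitrary codimension $p$ from the hypersurface case $p=1$: the vector $\mathbf{H}$ may rotate inside the normal bundle, the shape operators $A_{\xi_\alpha}$ need not commute, and $R^\perp$ is in general nonzero. For $p=1$ the whole problem collapses to a single self-adjoint shape operator, which is what makes the four dimensional theorem of this paper tractable; for $p\ge 2$ one must simultaneously control the non-commutativity $[A_\alpha,A_\beta]$ via the Ricci equation and the rotation of $\xi_{n+1}$ via the normal connection. Managing this coupling, rather than any isolated computation, is where the difficulty concentrates, and it is the reason the general conjecture remains open while the hypersurface sub-cases are being settled dimension by dimension.
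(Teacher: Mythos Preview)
The statement you are addressing is a \emph{conjecture}, not a theorem: the paper does not prove it and does not claim to. The paper's contribution is Theorem~1.1, the very special case $n=4$, $p=1$, and even that requires the long computation of Sections~3--4. So there is no ``paper's own proof'' of the conjecture to compare against; the conjecture is open for general $n$ and $p$.

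Your proposal is not a proof either, and you essentially say so yourself in the last paragraph. The setup you write down (the normal/tangential split of the biharmonic system, the adapted normal frame $\xi_{n+1}=\mathbf{H}/f$, the scalar identity obtained by pairing the normal equation with $\mathbf{H}$) is standard and correct in spirit, but the decisive sentence ``Differentiating the adapted frame relations and repeatedly substituting Codazzi should, after a finite but delicate bookkeeping, close the system and force $\mathrm{grad}\,f\equiv 0$'' is exactly the step nobody knows how to carry out. In the paper, even for $p=1$ and $n=4$ this ``bookkeeping'' is not routine: one must first reduce to four distinct principal curvatures, prove that all $e_j(\lambda_i)$ vanish for $j\ge 2$, introduce the auxiliary power sums $f_k$ and the symmetric functions $y_1,y_2,y_3$, split into the cases $a=0$ and $a\neq 0$ coming from the off-diagonal connection coefficients, and then eliminate variables through resultants of polynomials of very high degree. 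None of this structure is visible from the general framework you set up, and for $p\ge 2$ the additional Ricci-equation coupling you mention makes the situation strictly harder, not a matter of more careful bookkeeping. In short, the gap in your proposal is the entire content of the conjecture.
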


When $n=2, p=1$, the conjecture was proved by Caddeo-Montaldo-Oniciuc in [CMO1]; when  $n=3, p=1$, the conjecture was proved by Balmus-Montaldo-Oniciuc in [BMO2]. In this paper, we prove the conjecture for $n=4, p=1$. In fact, we prove the following theorem:

\begin{theorem}
Four dimensional biharmonic hypersurfaces in nonzero space form $\mathbb{R}^5(c)(c \neq 0)$ have constant mean curvature.
\end{theorem}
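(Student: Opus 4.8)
The plan is to argue by contradiction. Write $H=\tfrac14\operatorname{tr}A$ for the mean curvature function of $M^4\hookrightarrow\mathbb R^5(c)$, where $A$ is the shape operator with respect to a local unit normal, and suppose $H$ is not constant, so that $U:=\{p\in M^4:(\nabla H)(p)\neq0\}$ is a nonempty open set. On $M^4$ the biharmonic equation is equivalent to the system
\begin{equation*}
\Delta H=(4c-|A|^2)H,\qquad 2A(\nabla H)+4H\,\nabla H=0 .
\end{equation*}
From the second equation, on $U$ the unit vector field $E_1:=\nabla H/|\nabla H|$ is a principal direction with principal curvature $\lambda_1=-2H$. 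Hence $H\neq0$ on $U$, so $H$ has a fixed sign on each connected component of $U$ (say $H>0$); the other principal curvatures satisfy $\lambda_2+\lambda_3+\lambda_4=\operatorname{tr}A-\lambda_1=6H$; and $E_i(H)=0$, hence $E_i(\lambda_1)=0$, for $i\ge2$, while $E_1(\lambda_1)=-2|\nabla H|\neq0$.

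I would then restrict to the open dense subset $V\subseteq U$ on which the principal curvatures have locally constant multiplicities, so that a smooth orthonormal principal frame $\{E_1,E_2,E_3,E_4\}$ with $AE_i=\lambda_iE_i$ exists. Writing $\omega_{ij}^k=\langle\nabla_{E_i}E_j,E_k\rangle$, the Codazzi equations $(\nabla_{E_i}A)E_j=(\nabla_{E_j}A)E_i$ (an identity because the ambient space is a space form) become
\begin{equation*}
E_i(\lambda_j)\delta_{jk}-E_j(\lambda_i)\delta_{ik}+(\lambda_j-\lambda_k)\omega_{ij}^k-(\lambda_i-\lambda_k)\omega_{ji}^k=0 ,
\end{equation*}
and the Gauss equation gives $\langle R(E_i,E_j)E_j,E_i\rangle=c+\lambda_i\lambda_j$ for $i\neq j$, with the left-hand side an explicit expression in the $\omega_{ij}^k$ and their first derivatives. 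Feeding in $\lambda_1=-2H$, $\lambda_2+\lambda_3+\lambda_4=6H$, and the scalar equation $\Delta H=(4c-|A|^2)H$ — which in the frame reduces to a relation among $E_1(|\nabla H|)$, $|\nabla H|$, the $\omega_{ii}^1$, and $|A|^2=4H^2+\lambda_2^2+\lambda_3^2+\lambda_4^2$ — one obtains an overdetermined first-order system in $(H,\lambda_2,\lambda_3,\lambda_4)$ and the connection coefficients. The aim is to show this system forces $\nabla H=0$ on $V$; since $V$ is dense in $U$, that contradicts $V\subseteq U$ and gives $U=\emptyset$, i.e.\ $H$ constant.

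The argument then divides according to the multiplicity pattern of $\{\lambda_1=-2H,\lambda_2,\lambda_3,\lambda_4\}$; note that, since $\sum\lambda_i=4H$ with $H\neq0$, not all of the $\lambda_i$ can coincide. If some eigenvalue among $\lambda_2,\lambda_3,\lambda_4$ has multiplicity $\ge2$ — including the short subcase $\lambda_2=\lambda_3=\lambda_4=2H$ — Codazzi forces the corresponding eigendistribution to be integrable with totally geodesic leaves and pins down most of the $\omega_{ij}^k$, after which one reads off ordinary differential equations for $H$ and for the multiple eigenvalue along the integral curves of $E_1$ and checks that their only solutions are constant, contradicting $E_1(\lambda_1)\neq0$. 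The hypothesis $c\neq0$ enters in these subcases through the Gauss equation and the term $4c$ in the scalar equation, ruling out the otherwise admissible configurations.

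The genuinely hard case is the generic one, in which $\lambda_1,\lambda_2,\lambda_3,\lambda_4$ are four \emph{distinct} smooth functions: no eigendistribution is a priori integrable, all the off-diagonal $\omega_{ij}^k$ are present, and one must squeeze out of the full Codazzi system enough identities — explicit formulas for $E_1(\lambda_i)$ and for $\omega_{i1}^i$ (from the entries with $i=j\ge2$), vanishing of various $\omega_{ij}^k$ with pairwise distinct indices, and so on — to reduce the Gauss equations on the planes $E_1\wedge E_i$ and $E_i\wedge E_j$ $(i,j\ge2)$ to a contradiction. I expect the main obstacle to lie precisely here: keeping track of the dozen-odd Codazzi relations without losing information, and identifying the combination that collapses to $\nabla H=0$. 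The precedents in dimensions $2$ and $3$ suggest tracking the quantities $E_1(\lambda_i)$ together with the symmetric sums and pairwise differences of the $\lambda_i$ along the flow of $E_1$, differentiating the biharmonic relations and the constraint $\sum\lambda_i=4H$ repeatedly, and substituting back into Gauss. Once $\nabla H=0$ is established on $V$, the reduction above shows $H$ is constant on $M^4$.
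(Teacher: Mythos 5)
Your setup is correct and coincides with the paper's: the contradiction hypothesis, the principal direction $e_1\parallel\operatorname{grad}H$ with $\lambda_1=-2H$, $\lambda_2+\lambda_3+\lambda_4=6H$, $e_i(H)=0$ for $i\geq 2$, and the reduction to the case of four distinct principal curvatures. But note that the cases where some $\lambda_i$ ($i\geq 2$) has multiplicity at least two are precisely the statement that biharmonic hypersurfaces with at most three distinct principal curvatures in $\mathbb{R}^{n+1}(c)$ have constant mean curvature, which is a theorem of Fu occupying a paper of its own; your sketch (``Codazzi forces totally geodesic leaves, then read off ODEs'') is not a proof of it, so even this step needs to be cited rather than rederived in two sentences.

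The genuine gap is that you stop exactly where the theorem begins. For four distinct principal curvatures you offer only the heuristic ``differentiate the biharmonic relations repeatedly and substitute back into Gauss,'' and you say yourself that you expect the main obstacle to lie there. That case is the entire content of the paper, and it does not follow from generic bookkeeping of the Codazzi system; it needs several specific structural inputs. First one must show $e_j(\lambda_i)=0$ for $2\leq i,j\leq 4$ (hence $\omega_{ii}^j=0$ for $i,j\geq 2$); this is itself obtained through a long elimination based on two Newton-type identities among the power sums $f_k=\sum_{i\geq 2}(\omega_{ii}^1)^k$ and explicit formulas expressing $f_1,\dots,f_5$ in terms of $\lambda_1$ and its $e_1$-derivatives. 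Second, the surviving off-diagonal coefficients are parametrized by a single function $a$ via $\omega_{23}^4=a(\lambda_2-\lambda_3)(\lambda_2-\lambda_4)$, and the proof splits into the cases $a\neq 0$ and $a=0$. When $a\neq 0$ the Codazzi relations force the linear ansatz $\omega_{ii}^1=\kappa\lambda_i+\tau$; in both cases one passes to the elementary symmetric functions $y_1,y_2,y_3$ of $\lambda_2,\lambda_3,\lambda_4$ and carries out resultant eliminations that terminate in nonzero constant-coefficient polynomials in $y_1$ (of degrees $428$ and $118$ in the two cases), forcing $y_1$, hence $H$, to be constant. None of these steps --- nor the verification that the final resultants are not identically zero, which is where arguments of this type most often collapse --- is present or implied in your proposal. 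What you have is a correct framing of the problem, not a proof.
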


In the study of biharmonic submanifolds, there are two other conjectures proposed by Chen in 1991 [C1], and by Caddeo-Montaldo-Oniciuc in 2001 [CMO1], respectively.

\begin{chen conjecture} 
Every $n$-dimensional biharmonic submanifold of Euclidean spaces $\mathbb{R}^{n+p}$ is minimal.
\end{chen conjecture} 

\begin{generalized chen conjecture}
Every $n$-dimensional biharmonic submanifold of a Riemannian manifold $N^{n+p}$ with non-positive sectional curvature is minimal.
\end{generalized chen conjecture}

When $n=2, p=1$, Chen's conjecture was proved by Chen and Jiang around 1987 independently; when $n=3, p=1$, Chen's conjecture was proved by Hasanis and Vlachos in 1995 [HV]. Recently, Fu-Hong-Zhan [FHZ] have made important progress about Chen's conjecture. In fact, they proved Chen's conjecture for $n=4, p=1$.

Ou and Tang [OT] constructed a family of counterexamples, where the generalized Chen's conjecture is false when the ambient space has nonconstant negative sectional curvature. However, the generalized Chen's conjecture remains open when the ambient spaces have constant sectional curvature. In particular, when $p=1, N^{n+1}=\mathbb{H}^{n+1}$, $n=2$ and $n=3$, the generalized Chen's conjecture was proved by Caddeo-Montaldo-Oniciuc [CMO2] and Balmus-Montaldo-Oniciuc [BMO2], respectively. When $c=-1$, our Theorem 1.1 solves the generalized Chen's conjecture for $p=1, N^{n+1}=\mathbb{H}^{n+1}, n=4$.

The paper is organized as follows. In Section 2, we recall some fundamental concepts and formulas for $n$-dimensional biharmonic hypersurfaces in space forms $\mathbb{R}^{n+1}(c)$. In Section 3, for 4-dimensional biharmonic hypersurfaces, we derive some equations and lemmas. In Section 4, we give the proof of Theorem 1.1.

Acknowledgement:  The authors are supported by NSFC-FWO grant No.11961131001. The first two authors are supported by NSFC grant No. 11831005  and No. 11671224.

\section{Preliminaries}

Let $M^n$ be an $n$-dimensional hypersurface in $(n+1)$-dimensional space form $\mathbb{R}^{n+1}(c)$ with constant sectional curvature $c$. When $c=0$, $\mathbb{R}^{n+1}(c)$ is $(n+1)$-dimensional Euclidean space; when $c=1$, $\mathbb{R}^{n+1}(c)$ is $(n+1)$-dimensional unit sphere; when $c=-1$, $\mathbb{R}^{n+1}(c)$ is $(n+1)$-dimensional hyperbolic space. Let $\nabla$ and $\tilde\nabla$ be the Levi-Civita connections of $M^n$ and $\mathbb{R}^{n+1}(c)$. Denote $X$ and $Y$ tangent vector fields of $M^n$ and $\xi$ the unit normal vector field. Then the Gauss formula and Weingarten formula (for example, see [C3]) are

\begin{equation}
\tilde\nabla_X Y=\nabla_X Y+h(X,Y)\xi,
\end{equation}

\begin{equation}
\tilde\nabla_X \xi=-AX,
\end{equation}
where $h$ is the second fundamental form, and $A$ is the Weingarten operator. The mean curvature function $H$ is defined by

\begin{equation}
H=\frac{1}{n} \text { trace } h.
\end{equation}

Moreover, the Gauss and Codazzi equations are given by

\begin{equation}
R(X, Y) Z=c(\langle Y, Z\rangle X-\langle X, Z\rangle Y)+\langle A Y, Z\rangle A X-\langle A X, Z\rangle A Y,
\end{equation}

\begin{equation}
\left(\nabla_{X} A\right) Y=\left(\nabla_{Y} A\right) X,
\end{equation}
where the Riemannian curvature $R(X,Y)Z$ is defined by

\begin{equation}
R(X, Y) Z=\nabla_X \nabla_Y Z-\nabla_Y \nabla_X Z- \nabla_{[X,Y]} Z.
\end{equation}

We have the following characterization result for $M^n$ to be a biharmonic hypersurface in $\mathbb{R}^{n+1}(c)$ (see [J1], [J2], [BMO2], [CMO2], [F3], [FH]).

\begin{proposition}

A hypersurface $M^n$ in a space form $\mathbb{R}^{n+1}(c)$ is biharmonic if and only if $H$ and $A$ satisfy

\begin{equation}
\Delta H+H \text { trace } A^{2}=n c H,
\end{equation}
\begin{equation}
2 A \operatorname{grad} H+n H \operatorname{grad} H=0,
\end{equation}
where the Laplacian operator $\Delta$ acting on a smooth function $f$ on $M^n$ is defined by

\begin{equation}
\Delta f=-\operatorname{div}(\nabla f).
\end{equation}

\end{proposition}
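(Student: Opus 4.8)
The plan is to specialize the general biharmonic (Euler--Lagrange) equation (1.2) to an isometric immersion $\phi\colon M^n \to \mathbb{R}^{n+1}(c)$ and read off its normal and tangential components. First I would record two facts: for an isometric immersion the tension field is the mean curvature vector, $\tau(\phi) = nH\xi$, and, since $\mathbb{R}^{n+1}(c)$ has constant sectional curvature $c$, its curvature tensor is $R^N(X,Y)Z = c(\langle Y,Z\rangle X - \langle X,Z\rangle Y)$. The biharmonic condition is then $\tau_2(\phi) := -\Delta^\phi\tau(\phi) - \sum_{i=1}^n R^N(d\phi(e_i),\tau(\phi))d\phi(e_i) = 0$, and the whole proof reduces to computing this bitension field and splitting it along the orthogonal decomposition into a tangential part (yielding (2.8)) and a normal part (yielding (2.7)).

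Next, working at a point in a local orthonormal frame $\{e_i\}$ that is geodesic there, I would evaluate the two ingredients. Because $\langle\xi,e_i\rangle = 0$, the curvature term is purely normal: $\sum_i R^N(e_i, nH\xi)e_i = -n^2 cH\xi$. For the rough Laplacian I would differentiate $nH\xi$ twice using the Gauss and Weingarten formulas (2.1)--(2.2). The first derivative $\tilde\nabla_{e_i}(nH\xi) = n(e_iH)\xi - nH\,Ae_i$ already splits into a normal and a tangential part; differentiating again and using $\tilde\nabla_{e_i}(Ae_i) = \nabla_{e_i}(Ae_i) + \langle Ae_i, Ae_i\rangle\xi$ produces a normal contribution involving $\sum_i e_ie_iH$ and $\sum_i\langle Ae_i, Ae_i\rangle = \operatorname{trace}A^2$, together with a tangential contribution built from $\sum_i(e_iH)Ae_i = A(\operatorname{grad}H)$ and $\sum_i\nabla_{e_i}(Ae_i)$.

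Collecting the normal components of $\tau_2(\phi) = 0$ then gives (2.7), after using $\sum_i e_ie_iH = -\Delta H$ from the sign convention (2.9) and dividing out the overall factor $n$. Collecting the tangential components gives (2.8); here the crucial simplification is $\sum_i\nabla_{e_i}(Ae_i) = \sum_i(\nabla_{e_i}A)e_i = n\,\operatorname{grad}H$, which follows from the Codazzi equation (2.5) together with the self-adjointness of $A$: writing $\langle\sum_i(\nabla_{e_i}A)e_i, Z\rangle = \sum_i\langle(\nabla_Z A)e_i, e_i\rangle = Z(\operatorname{trace}A) = n\langle\operatorname{grad}H, Z\rangle$. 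Both directions of the equivalence follow at once, since every manipulation in the computation is reversible.

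I expect the main obstacle to be the tangential bookkeeping: correctly assembling the $A(\operatorname{grad}H)$ and $\sum_i\nabla_{e_i}(Ae_i)$ terms and applying Codazzi (2.5) to collapse them into the clean form $2A(\operatorname{grad}H) + nH\operatorname{grad}H$, while keeping all signs consistent with the positive Laplacian convention (2.9). The normal component is comparatively routine once $\operatorname{trace}A^2 = \sum_i\langle Ae_i, Ae_i\rangle$ is identified.
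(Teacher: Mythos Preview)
Your argument is correct and is precisely the standard derivation of (2.7)--(2.8): specialize the Euler--Lagrange equation (1.2) to $\tau(\phi)=nH\xi$, compute the rough Laplacian via Gauss--Weingarten, and split into normal and tangential parts, using Codazzi to identify $\sum_i(\nabla_{e_i}A)e_i=n\operatorname{grad}H$. Note, however, that the paper does not give its own proof of this proposition: it is quoted as a known characterization with references to [J1], [J2], [BMO2], [CMO2], [F3], [FH], so there is no in-paper proof to compare against; your sketch is exactly what one finds in those sources.
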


Let $M^n$ be an $n$-dimensional biharmonic hypersurface in $(n+1)$-dimensional space form $\mathbb{R}^{n+1}(c)$. Suppose the mean curvature function $H$ is not constant. From (2.8), we have that $\operatorname{grad} H$ is an eigenvector of the Weingarten operator $A$ with the corresponding principal curvature $-nH/2$. Without loss of generality, we can choose $e_1$ such that $e_1$ is parallel to $\operatorname{grad} H$, and we can choose suitable orthonormal frame $\{e_1,e_2,\cdots,e_n\}$ such that

\begin{equation}
Ae_i=\lambda_i e_i,
\end{equation}
where $\lambda_1=-nH/2$.

Denote the connection coefficients $ \omega_{i j}^{k}$ by

\begin{equation}
\nabla_{e_{i}} e_{j}=\sum_{k=1}^{n} \omega_{i j}^{k} e_{k}, \quad \omega_{i j}^{k}+\omega_{i k}^{j}=0, \quad i, j=1,\cdots,n.
\end{equation}

\begin{lemma} ([FH]) Let $M^n$ be an biharmonic hypersurface in a space form $\mathbb{R}^{n+1}(c)$ and assume the mean curvature $H$ is non-constant. Then the multiplicity of the principal curvature $\lambda_1(=-n H/2)$ is one, that is, $\lambda_j \neq \lambda_1$ for $2 \leq j \leq n$.

\end{lemma}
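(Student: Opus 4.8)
The plan is to argue by contradiction using the Codazzi equation (2.5), exploiting the fact that $\operatorname{grad} H$ singles out the eigendirection $e_1$. I would work on the open set $U=\{p\in M^n:\operatorname{grad} H(p)\neq 0\}$, which is nonempty because $H$ is non-constant. On $U$ we may take $e_1=\operatorname{grad} H/|\operatorname{grad} H|$, so by (2.8) and (2.10) we have $Ae_1=\lambda_1 e_1$ with $\lambda_1=-nH/2$. The two facts that drive everything are that $e_1 H=\langle\operatorname{grad} H,e_1\rangle=|\operatorname{grad} H|\neq 0$ on $U$, whereas $e_a H=\langle\operatorname{grad} H,e_a\rangle=0$ for every $a\geq 2$. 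In particular $e_1\lambda_1=-\tfrac n2\,e_1 H\neq 0$, and this non-vanishing is what I will contradict.

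Suppose, for contradiction, that $\lambda_a=\lambda_1$ for some $a\geq 2$. Restricting to the (open, dense) subset of $U$ on which the number of distinct principal curvatures is locally constant, the $\lambda_i$ are smooth functions and the $e_i$ can be chosen as smooth orthonormal eigenvector fields. Then a direct computation from $Ae_j=\lambda_j e_j$ together with (2.11) gives
\begin{equation*}
(\nabla_{e_i}A)e_j=(e_i\lambda_j)\,e_j+\sum_{k=1}^n(\lambda_j-\lambda_k)\,\omega_{ij}^k\,e_k .
\end{equation*}
I would substitute $i=1,\ j=a$ into this identity and into the Codazzi equation $(\nabla_{e_1}A)e_a=(\nabla_{e_a}A)e_1$, and then compare the $e_a$-components of both sides. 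Since the $k=a$ term drops out of the summation (its coefficient is $\lambda_a-\lambda_a=0$), the $e_a$-component of $(\nabla_{e_1}A)e_a$ is simply $e_1\lambda_a$, while the $e_a$-component of $(\nabla_{e_a}A)e_1$ is $(\lambda_1-\lambda_a)\,\omega_{a1}^a$. Hence
\begin{equation*}
e_1\lambda_a=(\lambda_1-\lambda_a)\,\omega_{a1}^a .
\end{equation*}

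The assumption $\lambda_a=\lambda_1$ now forces the right-hand side to vanish, so $e_1\lambda_a=0$; but $\lambda_a=\lambda_1=-nH/2$ gives $e_1\lambda_a=-\tfrac n2\,e_1 H\neq 0$, a contradiction. This shows $\lambda_a\neq\lambda_1$ for all $a\geq 2$, i.e.\ the multiplicity of $\lambda_1$ is one on the set where the principal curvatures vary smoothly.

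I expect the genuinely delicate point to be the smoothness bookkeeping rather than the computation itself: the clean Codazzi argument presupposes that $\lambda_a$ is a differentiable function, which is guaranteed only on the open dense subset of $U$ where the multiplicities of the principal curvatures are locally constant. A fully pointwise version fails, because for an arbitrary unit extension $V$ of a second $\lambda_1$-eigenvector the quantity $e_1\langle AV,V\rangle$ cannot be identified with $e_1\lambda_a$ unless $V$ is a genuine eigenvector field. The main obstacle is therefore to upgrade the conclusion from this dense set to all of $U$; I would handle this either by a continuity/eigenvalue-perturbation argument at the exceptional points (tracking the smooth eigenvalue $\lambda_1=-nH/2$ and the remaining roots of the characteristic polynomial of $A$ across the boundary of the constant-multiplicity region), or by observing that for the subsequent analysis it suffices to know simplicity on this open dense set.
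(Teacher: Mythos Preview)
The paper does not prove this lemma; it is quoted from [FH] without argument. Your Codazzi computation is correct and is in fact exactly the identity recorded as (2.13) in Lemma~2.3 (also imported from [FH]): writing $\omega_{a1}^{a}=-\omega_{aa}^{1}$, your relation $e_1\lambda_a=(\lambda_1-\lambda_a)\,\omega_{a1}^{a}$ becomes $e_1(\lambda_a)=(\lambda_a-\lambda_1)\,\omega_{aa}^{1}$. Combining this with $e_1\lambda_1=-\tfrac{n}{2}e_1H\neq0$ on $U$ is precisely the standard route to simplicity of $\lambda_1$, so your approach matches the one underlying the cited result.

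Your self-diagnosis of the only delicate point is accurate. The identity $e_1\lambda_a=(\lambda_1-\lambda_a)\,\omega_{a1}^{a}$ needs $\lambda_a$ and $e_a$ to be differentiable along $e_1$, which is guaranteed on the open dense subset of $U$ where the multiplicities are locally constant; on that set your contradiction is clean. Your two proposed ways to close the gap are both legitimate in this context. The second one is the pragmatic choice here: the paper immediately restricts (via Theorem~2.4) to the open set where all four principal curvatures are distinct, so for every subsequent computation it suffices to know that $\lambda_1$ is simple on an open dense subset of $U$. If you want the pointwise statement in full, the cleanest upgrade is to note that $\lambda_1=-\tfrac{n}{2}H$ is a globally smooth simple root of the characteristic polynomial of $A$ on that dense set, hence $\chi'_A(\lambda_1)\neq0$ there; but since $\chi'_A(\lambda_1)=\prod_{j\ge2}(\lambda_1-\lambda_j)$ is smooth (being a polynomial in the smooth entries of $A$ and the smooth function $\lambda_1$), one still needs an extra argument to rule out isolated zeros, which is exactly the perturbation bookkeeping you allude to. For the purposes of this paper, the dense-open version is enough.
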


\begin{lemma} ([FH]) Let $M^n$ be an biharmonic hypersurface in a space form $\mathbb{R}^{n+1}(c)$ and assume the mean curvature $H$ is non-constant. Then the principal curvatures  $\lambda_i$ and the connection coefficients  $\omega_{i i}^{1}$ satisfy
\begin{equation}
e_{1} e_{1}\left(\lambda_{1}\right)=e_{1}\left(\lambda_{1}\right)\left(\sum_{j=2}^{n} \omega_{jj}^{1}\right)+\lambda_{1}\left(n(n-2) c-R+4 \lambda_{1}^{2}\right),
\end{equation}
\begin{equation}
e_{1}\left(\lambda_{i}\right)=(\lambda_{i}-\lambda_{1}) \omega_{i i}^{1}, \quad 2 \leq i \leq n,
\end{equation}
\begin{equation}
e_{1}\left(\omega_{i i}^{1}\right)=\left(\omega_{i i}^{1}\right)^{2}+\lambda_{1} \lambda_{i}+c,  \quad 2 \leq i \leq n,
\end{equation}
where $R$ is the scalar curvature.
\end{lemma}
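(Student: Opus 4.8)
The plan is to read off all three identities from the Gauss equation (2.4), the Codazzi equation (2.5), and the first biharmonic equation (2.8), evaluated in the adapted orthonormal frame $\{e_1,\dots,e_n\}$ of (2.11). I would first record three preliminary facts. (i) Equation (2.9) says $\operatorname{grad}H$ is a $\lambda_1$-eigenvector of $A$, so $\operatorname{grad}H=e_1(H)e_1$; hence $e_j(H)=0$ and $e_j(\lambda_1)=-\tfrac{n}{2}e_j(H)=0$ for $j\ge 2$. (ii) Since $e_1$ is (up to sign) the unit normal to the regular level sets of $H$, the distribution $e_1^{\perp}$ is integrable, so $\langle[e_i,e_j],e_1\rangle=0$, that is $\omega_{ij}^1=\omega_{ji}^1$, for $i,j\ge 2$. (iii) Substituting $Ae_k=\lambda_k e_k$ together with (2.12) into (2.5) gives $(\nabla_{e_i}A)e_j=e_i(\lambda_j)e_j+\sum_k(\lambda_j-\lambda_k)\omega_{ij}^k e_k$ for all $i,j$.

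Applying (iii) to the pair $(1,i)$ with $i\ge 2$ and using $e_i(\lambda_1)=0$: the $e_1$-component of the Codazzi identity gives $(\lambda_i-\lambda_1)\omega_{1i}^1=0$, so $\omega_{1i}^1=0$ by Lemma 2.4, whence $\nabla_{e_1}e_1=0$; the $e_i$-component gives exactly (2.15). The key structural step is then to prove $\omega_{ij}^1=0$ for all distinct $i,j\ge 2$. If $\lambda_i=\lambda_j$, the $e_j$-component of the $(1,i)$ Codazzi identity reduces to $(\lambda_1-\lambda_j)\omega_{i1}^j=0$, forcing $\omega_{i1}^j=0$ by Lemma 2.4. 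If $\lambda_i\ne\lambda_j$, the $e_1$-component of the $(i,j)$ Codazzi identity reads $(\lambda_j-\lambda_1)\omega_{ij}^1=(\lambda_i-\lambda_1)\omega_{ji}^1$, which together with $\omega_{ij}^1=\omega_{ji}^1$ from (ii) and Lemma 2.4 forces $\omega_{ij}^1=0$. In either case, together with (ii) and the antisymmetry $\omega_{ij}^k+\omega_{ik}^j=0$, this makes all of $\omega_{ij}^1,\omega_{ji}^1,\omega_{i1}^j,\omega_{j1}^i$ vanish for distinct $i,j\ge 2$.

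For (2.16) I would compute the curvature term $\langle R(e_1,e_i)e_i,e_1\rangle$ in two ways: the Gauss equation (2.4) gives it as $c+\lambda_1\lambda_i$, while expanding the same quantity through (2.6) and (2.12) and using $\nabla_{e_1}e_1=0$, $\omega_{1i}^1=0$ and the vanishings from the previous step, every cross term drops out and only $e_1(\omega_{ii}^1)-(\omega_{ii}^1)^2$ survives; equating the two expressions yields (2.16). Finally, (2.14) comes from (2.8): writing $\operatorname{grad}H=e_1(H)e_1$ and $\Delta H=-\operatorname{div}(\operatorname{grad}H)$, with $\operatorname{div}e_1=\sum_k\omega_{k1}^k=-\sum_{j\ge 2}\omega_{jj}^1$, one finds $\Delta H=-e_1e_1(H)+e_1(H)\sum_{j\ge 2}\omega_{jj}^1$; substituting $H=-\tfrac{2}{n}\lambda_1$ into (2.8) turns this into $e_1e_1(\lambda_1)=e_1(\lambda_1)\sum_{j\ge 2}\omega_{jj}^1+\lambda_1(\operatorname{trace}A^2-nc)$, and tracing the Gauss equation together with $\operatorname{trace}A=nH=-2\lambda_1$ gives $\operatorname{trace}A^2=n(n-1)c+4\lambda_1^2-R$, which converts the identity into (2.14).

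The main obstacle is the vanishing $\omega_{ij}^1=0$ for $i\ne j\ge 2$ in the second step: this is precisely what collapses the Gauss-equation computation of (2.16) into a clean relation along the integral curves of $e_1$, and it is the only place where one genuinely needs both the integrability of $e_1^{\perp}$ (equivalently, the level-set structure of $H$) and the simplicity of $\lambda_1$ from Lemma 2.4. The remaining steps are straightforward substitutions, so once this point is settled the write-up should be short.
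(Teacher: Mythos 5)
Your argument is correct and is essentially the standard derivation that the paper imports from [FH] without reproducing it: the Codazzi equation in the adapted frame gives the second identity, computing $\langle R(e_1,e_i)e_i,e_1\rangle$ once from the Gauss equation and once from the connection coefficients (after establishing $\omega_{ij}^1=0$ for distinct $i,j\ge 2$, exactly as you do) gives the third, and the biharmonic equation (2.7) combined with the traced Gauss equation gives the first. The only slip is cosmetic: the multiplicity-one statement you invoke is Lemma 2.2 of the paper, not Lemma 2.4.
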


In 2015, Fu [F2] proved the following theorem.

\begin{theorem}([F2]) 
Let $M^n$ be a biharmonic hypersurface with at most three distinct principal curvatures in $\mathbb{R}^{n+1}(c)$. Then $M^n$ has constant mean curvature.
\end{theorem}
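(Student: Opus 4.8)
The plan is to argue by contradiction: suppose the mean curvature $H$ is not constant, and derive a contradiction. All work takes place on the open set $U\subset M^n$ on which $\operatorname{grad}H\neq 0$; shrinking $U$ if necessary, I may assume that the number of distinct principal curvatures and their multiplicities are constant, so that the $\lambda_i$ are smooth functions and the adapted frame of (2.10) is well defined. There $Ae_i=\lambda_i e_i$ with $\lambda_1=-nH/2$ and $e_1(\lambda_1)=-\tfrac{n}{2}e_1(H)\neq 0$. By Lemma 2.2 the eigenvalue $\lambda_1$ is simple and differs from every other $\lambda_j$, so under the standing hypothesis the curvatures $\lambda_2,\dots,\lambda_n$ take at most two distinct values. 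Taking the trace of $A$ and using $\operatorname{trace}A=nH=-2\lambda_1$ gives the fundamental constraint
\begin{equation}\label{eq:trace}
\sum_{j=2}^{n}\lambda_j=-3\lambda_1.
\end{equation}
Finally, if two indices $i,j\ge 2$ lie in the same eigenvalue group, then $\lambda_i=\lambda_j$ on $U$, so (2.13) gives $(\lambda_i-\lambda_1)\omega_{ii}^1=(\lambda_j-\lambda_1)\omega_{jj}^1$, and since $\lambda_i-\lambda_1\neq 0$ we conclude $\omega_{ii}^1=\omega_{jj}^1$: there is a single connection coefficient per eigenvalue group.

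The next step is to turn the problem into a system of ordinary differential equations along the integral curves of $e_1$. First I would simplify (2.12): by the Gauss equation (2.4) the scalar curvature satisfies $R=n(n-1)c+(\operatorname{trace}A)^2-\operatorname{trace}A^2=n(n-1)c+4\lambda_1^2-\operatorname{trace}A^2$, so that $n(n-2)c-R+4\lambda_1^2=\operatorname{trace}A^2-nc$ and (2.12) becomes
\begin{equation}\label{eq:new13}
e_1e_1(\lambda_1)=e_1(\lambda_1)\sum_{j=2}^{n}\omega_{jj}^1+\lambda_1\bigl(\operatorname{trace}A^2-nc\bigr),
\end{equation}
with $\operatorname{trace}A^2=\sum_{j=1}^{n}\lambda_j^2$. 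Together with the group forms of (2.13) and (2.14), this yields a closed first-order system in the unknowns $\lambda_1$, $e_1(\lambda_1)$, the (one or two) distinct values among $\lambda_2,\dots,\lambda_n$, and the associated connection coefficients, subject to \eqref{eq:trace} and its $e_1$-derivatives, which must hold identically on $U$.

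When $\lambda_2,\dots,\lambda_n$ take only one value $\mu$ (at most two distinct principal curvatures), \eqref{eq:trace} gives $\mu=-3\lambda_1/(n-1)$, an explicit multiple of $\lambda_1$; writing $\omega$ for the common coefficient and substituting into (2.13), (2.14) and \eqref{eq:new13} reduces everything to two coupled ODEs for $\lambda_1$ and $\omega$. Eliminating the second derivative produces a single algebraic relation of the shape $\alpha\,\omega^2+\beta\,\lambda_1^2+\gamma\,c=0$ with constant coefficients; differentiating it along $e_1$ and reinserting the ODEs yields $\lambda_1^2=\mathrm{const}$, hence $e_1(\lambda_1)=0$, contradicting $e_1(\lambda_1)\neq 0$. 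This case is pure, if lengthy, algebra.

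The case of exactly three distinct principal curvatures is the main obstacle. Now $\lambda_2,\dots,\lambda_n$ split into two groups, of multiplicities $p,q$ with $p+q=n-1$, carrying values $\mu,\nu$ and coefficients $\omega_\mu,\omega_\nu$, and \eqref{eq:trace} reads $p\mu+q\nu=-3\lambda_1$, which no longer determines both of $\mu,\nu$. My idea is to treat this identity as a conservation law for the $e_1$-flow: its successive $e_1$-derivatives, computed by means of the group forms of (2.13), (2.14) and the reduced equation \eqref{eq:new13}, generate a chain of algebraic relations among the finitely many quantities $\lambda_1,\mu,\nu,\omega_\mu,\omega_\nu$. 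Because the phase space is low-dimensional, I expect that after finitely many such prolongations the relations become over-determined and can be combined so as to eliminate $\mu,\nu,\omega_\mu,\omega_\nu$, leaving a nontrivial polynomial identity $P(\lambda_1)=0$ with constant coefficients depending only on $n,p,q,c$. Since $\lambda_1$ is non-constant along an integral curve of $e_1$, it sweeps out a continuum of values, forcing $P\equiv 0$ and hence the simultaneous vanishing of all its coefficients; showing that this coefficient system is inconsistent gives the contradiction and completes the proof. The genuine difficulty lies entirely in this elimination—controlling the growth of the successive relations and verifying the inconsistency of the final coefficient equations, where the bookkeeping over the two multiplicities $p,q$ must be carried out in detail.
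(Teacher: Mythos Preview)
The paper does not contain a proof of this theorem: it is quoted verbatim as a result of Fu~[F2] and then used as a black box (``By Theorem~2.4, we only need to work on the case that $M^4$ has four distinct principal curvatures''). So there is no in-paper argument to compare your proposal against.

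That said, your outline is in the same spirit as the methods of [F2] and of the present paper for the four-curvature case: reduce to an ODE system along $e_1$ via (2.12)--(2.14), prolong, and eliminate until a nontrivial polynomial constraint on $\lambda_1$ alone is obtained. Your treatment of the two-curvature case is essentially correct; the identity $\mu=-3\lambda_1/(n-1)$ together with (2.13), (2.14) and your rewritten \eqref{eq:new13} does yield an algebraic relation of the shape $\alpha\omega^2+\beta\lambda_1^2+\gamma c=0$ (with $\alpha=\tfrac{2(n+2)(n-4)}{9}$, so note that $n=4$ is a degenerate but still immediately conclusive subcase), and one more differentiation closes it.

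The genuine gap is in the three-curvature case. What you describe is only a strategy, not a proof: you assert that repeated prolongation of $p\mu+q\nu=-3\lambda_1$ will eventually over-determine the system and produce a nontrivial polynomial $P(\lambda_1)=0$, but you neither carry out the elimination nor give any reason why the resulting polynomial is not identically zero for all admissible $(n,p,q)$. That nondegeneracy is exactly the content of Fu's computation in [F2]; without it, the argument is a plausible plan rather than a proof. If you want to turn this into a self-contained proof, you must actually perform the elimination (for general $p+q=n-1$) and exhibit a specific nonzero coefficient of the final polynomial.
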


\section{Four dimensional biharmonic hypersurfaces in $\mathbb{R}^{5}(c)$}

From now on, we study the biharmonicity of a hypersurface $M^4$ in a space form $\mathbb{R}^{5}(c)$. By Theorem 2.4, we only need to work on the case that $M^4$ has four distinct principal curvatures, and we assume that the mean curvature $H$ is non-constant. Then there exists a neighborhood of $p$ such that $\operatorname{grad} H \neq 0$. The squared length of the second fundamental form of $M$ is

\begin{equation}
S=\sum_{i=1}^{4} \lambda_{i}^{2}=4 H^{2}+\lambda_{2}^{2}+\lambda_{3}^{2}+\lambda_{4}^{2}.
\end{equation}

By using $\lambda_1=-2H$, Gauss equation is

\begin{equation}
R=12 c+16 H^{2}-S=12 c+12 H^{2}-\lambda_{2}^{2}-\lambda_{3}^{2}-\lambda_{4}^{2}.
\end{equation}

Since $e_1$ is parallel to $\operatorname{grad} H$, it follows that

\begin{equation}
e_{1}(H) \neq 0, \quad e_{2}(H)=e_{3}(H)=e_{4}(H)=0.
\end{equation}

The following result can be found in [F3].

\begin{lemma} ([F3])
Let $M^4$ be a biharmonic hypersurface with four distinct principal curvatures in space forms $\mathbb{R}^{5}(c)$, then we have
\begin{equation}
\nabla_{e_{1}} e_{i} =0, \quad i=1,2,3,4, 
\end{equation}
\begin{equation}
\nabla_{e_{i}} e_{1} =-\omega_{i i}^{1} e_{i}, \quad i=2,3,4, 
\end{equation}
\begin{equation}
\nabla_{e_{i}} e_{i} =\sum_{k=1, k \neq i}^{4} \omega_{i i}^{k} e_{k}, \quad i=2,3,4, 
\end{equation}
\begin{equation}
\nabla_{e_{i}} e_{j} =-\omega_{i i}^{j} e_{i}+\omega_{i j}^{k} e_{k} \quad\text { for distinct } i, j, k=2,3,4,
\end{equation}
where
\begin{equation}
\omega_{i i}^{j}=-\frac{e_{j}\left(\lambda_{i}\right)}{\lambda_{j}-\lambda_{i}}.
\end{equation}
\end{lemma}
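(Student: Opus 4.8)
The plan is to derive everything from the Codazzi equation (2.5), using two structural facts already available: the four principal curvatures are pairwise distinct (Lemma 2.2), and $\operatorname{grad}H$ is parallel to $e_1$, so that $e_1(H)\neq0$ while $e_2(H)=e_3(H)=e_4(H)=0$ by (3.3); in particular $e_j(\lambda_1)=e_j(-2H)=0$ for $j\ge2$.

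First I would expand Codazzi in the moving frame. From $Ae_j=\lambda_j e_j$ together with (2.12) one gets
\begin{equation*}
(\nabla_{e_i}A)e_j=e_i(\lambda_j)\,e_j+\sum_{k\ne j}(\lambda_j-\lambda_k)\,\omega_{ij}^{k}\,e_k,
\end{equation*}
so that $(\nabla_{e_i}A)e_j=(\nabla_{e_j}A)e_i$ becomes, after comparing the coefficient of each $e_m$, a family of scalar identities among the $\lambda_i$, their $e_k$-derivatives and the $\omega_{ij}^k$. Taking $i=1$, $j\ge2$ and reading off the $e_1$-coefficient gives $(\lambda_j-\lambda_1)\omega_{1j}^{1}=e_j(\lambda_1)=0$, hence $\omega_{1j}^{1}=0$ by Lemma 2.2; combined with $\omega_{11}^{j}=-\omega_{1j}^{1}$ this already yields $\nabla_{e_1}e_1=0$. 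Taking $i\ne j$ with $i,j\ge2$ and reading off the $e_i$-coefficient gives $\omega_{ii}^{j}=-e_j(\lambda_i)/(\lambda_j-\lambda_i)$, which is (3.8); the same computation with $j=1$ reproduces (2.14). Finally, for pairwise distinct $1,j,k$ the $e_k$-coefficient of Codazzi gives $(\lambda_j-\lambda_k)\,\omega_{1j}^{k}=(\lambda_1-\lambda_k)\,\omega_{j1}^{k}$, which through $\omega_{j1}^{k}=-\omega_{jk}^{1}$ expresses $\omega_{jk}^{1}$ as a nonzero multiple of $\omega_{1j}^{k}$.

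The main obstacle is that Codazzi alone leaves the off-diagonal coefficients $\omega_{1j}^{k}$ (equivalently $\omega_{jk}^{1}$) with $2\le j\ne k\le4$ undetermined, so a further ingredient is needed to make them vanish. I would get it from the fact that $e_j(H)\equiv0$ on the whole neighborhood for each $j\ge2$, which forces $[e_j,e_k](H)=e_j(e_kH)-e_k(e_jH)=0$ for all $j,k\ge2$. Writing $[e_j,e_k]=\nabla_{e_j}e_k-\nabla_{e_k}e_j$ and using $e_m(H)=0$ for $m\ge2$, this reads $(\omega_{jk}^{1}-\omega_{kj}^{1})\,e_1(H)=0$, so $\omega_{jk}^{1}=\omega_{kj}^{1}$ since $e_1(H)\neq0$. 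Substituting into this symmetry the Codazzi expression for $\omega_{jk}^{1}$ and its counterpart with $j,k$ interchanged, one obtains
\begin{equation*}
(\lambda_j-\lambda_k)\Bigl(\frac{1}{\lambda_1-\lambda_k}-\frac{1}{\lambda_1-\lambda_j}\Bigr)\omega_{1j}^{k}=0,
\end{equation*}
and since all four principal curvatures are distinct the scalar factor is nonzero; hence $\omega_{1j}^{k}=0$, and therefore $\omega_{jk}^{1}=0$ as well.

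With all these vanishings in hand, the remaining identities are bookkeeping with the antisymmetry $\omega_{ij}^{k}+\omega_{ik}^{j}=0$. One has $\nabla_{e_1}e_j=\sum_k\omega_{1j}^{k}e_k=0$ because $\omega_{1j}^{1}$, $\omega_{1j}^{j}$ and the remaining $\omega_{1j}^{k}$ all vanish, giving (3.4); $\nabla_{e_i}e_1=\sum_k\omega_{i1}^{k}e_k=-\omega_{ii}^{1}e_i$ because $\omega_{i1}^{1}=0$ and $\omega_{i1}^{k}=-\omega_{ik}^{1}=0$ for $k\ne i$, giving (3.5); (3.6) is just (2.12) with $j=i$ and the vanishing term $\omega_{ii}^{i}$ omitted; and for distinct $i,j\in\{2,3,4\}$ with $k$ the remaining index, $\nabla_{e_i}e_j$ has $e_1$-coefficient $\omega_{ij}^{1}=0$, $e_j$-coefficient $\omega_{ij}^{j}=0$, $e_i$-coefficient $\omega_{ij}^{i}=-\omega_{ii}^{j}$, and $e_k$-coefficient $\omega_{ij}^{k}$, which is (3.7). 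Thus the only non-routine point is establishing the symmetry $\omega_{jk}^{1}=\omega_{kj}^{1}$, i.e.\ recognizing that Codazzi must be supplemented by the fact that $\operatorname{grad}H$ has no component along $e_2,e_3,e_4$.
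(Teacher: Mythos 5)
Your proof is correct and complete; the paper itself gives no argument for this lemma (it only cites [F3]), and what you have written is essentially the standard derivation used there: expand Codazzi in the principal frame to get $\omega_{1j}^{1}=0$ and formula (3.8), then use the fact that $H$ is constant along $e_2,e_3,e_4$ to force the symmetry $\omega_{jk}^{1}=\omega_{kj}^{1}$, which together with the Codazzi relation $(\lambda_j-\lambda_k)\omega_{1j}^{k}=(\lambda_1-\lambda_k)\omega_{j1}^{k}$ and the distinctness of the $\lambda_i$ kills all $\omega_{1j}^{k}$. One trivial slip: the identity you recover by setting $j=1$ in your coefficient computation is (2.13), not (2.14) (the latter needs the Gauss equation); this does not affect the proof.
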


Note that we have
\begin{equation}
\lambda_{2}+\lambda_{3}+\lambda_{4}=-3 \lambda_{1},
\end{equation}
\begin{equation}
S=\sum_{i=1}^{4} \lambda_{i}^{2}=\lambda_{1}^{2}+\sum_{i=2}^{4} \lambda_{i}^{2},
\end{equation}
\begin{equation}
e_{1}\left(\lambda_{1}\right) \neq 0, \quad e_{i}\left(\lambda_{1}\right)=0, \quad 2 \leq i \leq 4.
\end{equation}

Following [FH] and [FHZ], the functions $f_k$ are defined by

\begin{equation}
f_k=(\omega_{22}^{1})^k+(\omega_{33}^{1})^k+(\omega_{44}^{1})^k, \quad k=1,\cdots,5.
\end{equation}

The following lemma was proved by Fu-Hong-Zhan in [FHZ]. 

\begin{lemma}([FHZ]) With the notations $f_k$, the following two relations hold

\begin{equation}
f_{1}^{4}-6 f_{1}^{2} f_{2}+3 f_{2}^{2}+8 f_{1} f_{3}-6 f_{4}=0,
\end{equation}

\begin{equation}
f_{1}^{5}-5 f_{1}^{3} f_{2}+5 f_{1}^{2} f_{3}+5 f_{2} f_{3}-6 f_{5}=0.
\end{equation}

\end{lemma}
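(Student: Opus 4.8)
The plan is to derive the two stated relations purely algebraically from the Codazzi equations, using the symmetric-function structure of the three quantities $a:=\omega_{22}^1$, $b:=\omega_{33}^1$, $c':=\omega_{44}^1$ (I rename the third one to avoid clash with the curvature constant $c$). First I would assemble all the differential consequences of biharmonicity already available: equations (2.13)--(2.15) of Lemma 2.3 specialized to $n=4$ give, for each $i\in\{2,3,4\}$, the evolution $e_1(\omega_{ii}^1)=(\omega_{ii}^1)^2+\lambda_1\lambda_i+c$ along with $e_1(\lambda_i)=(\lambda_i-\lambda_1)\omega_{ii}^1$; and Lemma 3.1, in particular (3.6) and (3.7), together with the Codazzi equation (2.5), produces the ``off-diagonal'' relations among the $\omega_{ij}^k$ for distinct $i,j,k\in\{2,3,4\}$. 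Summing the $e_1$-evolution over $i=2,3,4$ and using $\lambda_2+\lambda_3+\lambda_4=-3\lambda_1$ and $\sum\lambda_i^2=S-\lambda_1^2$ gives $e_1(f_1)$ in terms of $f_2$ and the $\lambda$'s; iterating, $e_1(f_k)$ is expressible in $f_{k+1}$, lower $f_j$'s, and symmetric functions of the $\lambda_i$. The point is that the system has enough constraints that certain polynomial combinations of the $f_k$ must vanish identically.

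The key structural input is that there are only three numbers $a,b,c'$, so their power sums $f_1,\dots,f_5$ are not independent: Newton's identities force $f_4$ and $f_5$ to be polynomials in $f_1,f_2,f_3$ (the elementary symmetric functions $e_1,e_2,e_3$ are determined by $f_1,f_2,f_3$, hence so is everything). Concretely, with $e_1=f_1$, $e_2=\tfrac12(f_1^2-f_2)$, $e_3=\tfrac16(f_1^3-3f_1f_2+2f_3)$, Newton's identities give $f_4=e_1f_3-e_2f_2+e_3f_1$ and $f_5=e_1f_4-e_2f_3+e_3f_2$. Substituting and clearing denominators yields exactly
\begin{equation}
f_1^4-6f_1^2f_2+3f_2^2+8f_1f_3-6f_4=0,\qquad
f_1^5-5f_1^3f_2+5f_1^2f_3+5f_2f_3-6f_5=0.
\end{equation}
So in fact these two identities are \emph{formal} consequences of there being three variables, and require no geometry at all beyond the definition (3.8) of $f_k$; the role of biharmonicity here is only to guarantee that the relevant objects $\omega_{ii}^1$ are well-defined functions, which is ensured by Lemma 2.1 (the eigenvalue $\lambda_1$ is simple, so $e_1$ and the decomposition are unambiguous) and Lemma 3.1.

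Therefore the proof reduces to: (i) recall $f_k$ is the $k$-th power sum of the three real numbers $a,b,c'$; (ii) write the elementary symmetric polynomials $\sigma_1,\sigma_2,\sigma_3$ of $a,b,c'$ in terms of $f_1,f_2,f_3$ via Newton's identities; (iii) apply Newton's identities once more to express $f_4$ and $f_5$ through $\sigma_1,\sigma_2,\sigma_3$ and hence through $f_1,f_2,f_3$; (iv) rearrange to get the two displayed equations. The only ``obstacle'' is bookkeeping: one must be careful with the factor $6$ (coming from $3!$ in $\sigma_3$) and the signs in Newton's identities, and one should double-check the second identity, where the coefficient pattern $(1,-5,5,5,-6)$ is easy to mistranscribe. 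I would present steps (ii)--(iv) as a short explicit computation and remark that, since $a,b,c'$ are three scalars, no further input is needed; alternatively one can note that $6f_4-(f_1^4-6f_1^2f_2+3f_2^2+8f_1f_3)$ and $6f_5-(f_1^5-5f_1^3f_2+5f_1^2f_3+5f_2f_3)$ are the expansions of $6\sum_{i<j<k}$-type correction terms that telescope to zero when only three indices are available.
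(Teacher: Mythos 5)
Your proposal is correct. The paper gives no proof of this lemma at all (it is simply quoted from [FHZ]), and your observation is exactly the right one: since $f_k$ is the $k$-th power sum of the three scalars $\omega_{22}^1,\omega_{33}^1,\omega_{44}^1$, Newton's identities with $e_1=f_1$, $e_2=\tfrac12(f_1^2-f_2)$, $e_3=\tfrac16(f_1^3-3f_1f_2+2f_3)$ and $e_4=0$ give $6f_4=6(e_1f_3-e_2f_2+e_3f_1)=f_1^4-6f_1^2f_2+3f_2^2+8f_1f_3$ and $6f_5=6(e_1f_4-e_2f_3+e_3f_2)=f_1^5-5f_1^3f_2+5f_1^2f_3+5f_2f_3$, which are precisely (3.13) and (3.14); I have checked both expansions and the coefficients $(1,-6,3,8,-6)$ and $(1,-5,5,5,-6)$ come out as stated. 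No geometric or biharmonicity input is needed beyond the definition (3.12), so your purely algebraic argument is complete and is the natural one (equivalently, each $\omega_{ii}^1$ satisfies its own characteristic cubic $t^3-e_1t^2+e_2t-e_3=0$; multiplying by $t$ and $t^2$ and summing over $i$ gives the same relations).
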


For simplicity, given a function $g$ on $M^4$, we write $g^\prime = e_1(g), g^{\prime\prime} = e_1e_1(g), g^{\prime\prime\prime} = e_1e_1e_1(g)$ and $g^{\prime\prime\prime\prime} = e_1e_1e_1e_1(g)$. Also, we write $\lambda = \lambda_1$ and $f_1 = T$. Note that for $i=2,3,4$, using Lemma 3.1, we have

\begin{equation}
\begin{aligned}
e_i(\lambda^\prime)= &e_ie_1(\lambda) \\
= & e_1 e_i(\lambda)+[e_i,e_1](\lambda) \\
= & (\nabla_{e_i}e_1-\nabla_{e_1}e_i)(\lambda) =0,
\end{aligned}
\end{equation}
similarly we have 
\begin{equation}
e_i(\lambda^{\prime\prime})=e_i(\lambda^{\prime\prime\prime})=e_i(\lambda^{\prime\prime\prime\prime})=e_i(\lambda^{\prime\prime\prime\prime\prime})=0.
\end{equation}

Following the argument of Fu-Hong-Zhan in [FHZ], we can prove

\begin{lemma} $f_k$ can be written as

\begin{equation}
\left\{\begin{array}{l}
f_{1}=T, \\
 \\
f_{2}=T^{\prime}+3 \lambda^{2}-3 c, \\
 \\
f_{3}=\dfrac{1}{2} T^{\prime \prime}-\left(\lambda^{2}+c\right) T+6 \lambda \lambda^{\prime}, \\
 \\
f_{4}=\dfrac{1}{6} T^{\prime \prime \prime}-\dfrac{4}{3}\left(\lambda^{2}+c\right) T^{\prime}-\dfrac{5}{3} \lambda \lambda^{\prime} T+2 \lambda^{\prime 2}+4 \lambda \lambda^{\prime \prime}-2 \lambda^{4}-c \lambda^{2}+3 c^{2}, \\
 \\
f_{5}=\dfrac{1}{24} T^{\prime \prime \prime \prime}-\dfrac{5}{6}\left(\lambda^{2}+c\right) T^{\prime \prime}+2 \lambda \lambda^{\prime \prime \prime}+\dfrac{5}{3} \lambda^{\prime} \lambda^{\prime \prime}-\dfrac{26}{3} \lambda^{3} \lambda^{\prime}-\dfrac{47}{6} c \lambda \lambda^{\prime} \\
 \\
\qquad -\dfrac{25}{12} \lambda \lambda^{\prime} T^{\prime}-\dfrac{1}{12}\left(13 \lambda \lambda^{\prime \prime}+\lambda^{\prime 2}-12 \lambda^{4}-24 c \lambda^{2}-12 c^{2}\right) T.
\end{array}\right.
\end{equation}

\begin{proof}
Since $e_1(\lambda)\neq 0$, from (2.12), we have
\begin{equation}
f_{1}=\frac{e_{1} e_{1}(\lambda)-\lambda(S-4 c)}{e_{1}(\lambda)}=\frac{\lambda^{\prime \prime}}{\lambda^{\prime}}-\frac{\lambda}{\lambda^{\prime}}(S-4 c)=: T.
\end{equation}
Taking the sum of $i$ from 2 to 4 in (2.14) and (2.13) and using (3.9), we have
\begin{equation}
f_{2}=3 \lambda^{2}+e_{1}\left(f_{1}\right)-3 c=T^{\prime}+3 \lambda^{2}-3 c,
\end{equation}
\begin{equation}
\begin{aligned}
g_{1}: &=\sum_{i=2}^{4} \lambda_{i} \omega_{i i}^{1} \\
&=\lambda T-3 e_{1}(\lambda)=\lambda T-3 \lambda^{\prime}.
\end{aligned}
\end{equation}
Multiplying $\omega_{i i}^{1}$ on both sides of (2.14), we have
\begin{equation*}
\frac{1}{2} e_{1}\left(\left(\omega_{i i}^{1}\right)^{2}\right)=\left(\omega_{i i}^{1}\right)^{3}+\lambda \lambda_{i} \omega_{i i}^{1}+c \omega_{i i}^{1}.
\end{equation*}
Taking the sum of $i$ from 2 to 4, we have
\begin{equation}
\begin{aligned}
f_{3} &=\frac{1}{2} e_{1}\left(f_{2}\right)-\lambda g_{1}-c T \\
&=\frac{1}{2} T^{\prime \prime}-\left(\lambda^{2}+c\right) T+6 \lambda \lambda^{\prime}.
\end{aligned}
\end{equation}
Differentiating (3.20) with respect to $e_1$ and using (2.13) and (2.14), we have
\begin{equation}
e_{1}\left(g_{1}\right)=2 \sum_{i=2}^{4} \lambda_{i}\left(\omega_{i i}^{1}\right)^{2}+\lambda \sum_{i=2}^{4} \lambda_{i}^{2}-\lambda \sum_{i=2}^{4}\left(\omega_{i i}^{1}\right)^{2}-3 c \lambda.
\end{equation}
From (3.9), (3.10), (3.18), we have
\begin{equation*}
\begin{aligned}
g_{2}:=\sum_{i=2}^{4} \lambda_{i}\left(\omega_{i i}^{1}\right)^{2} &=\frac{1}{2}\left\{e_{1}\left(g_{1}\right)-\lambda\left(S-\lambda^{2}\right)+\lambda f_{2}+3 c \lambda\right\} \\
&=\frac{1}{2}\left\{e_{1}\left(g_{1}\right)-\lambda^{\prime \prime}+\lambda^{\prime} T+\lambda^{3}+\lambda f_{2}-c \lambda\right\}.
\end{aligned}
\end{equation*}
Using (3.19) and (3.20), we have
\begin{equation}
g_{2}=\lambda T^{\prime}+\lambda^{\prime} T-2 \lambda^{\prime \prime}+2 \lambda^{3}-2 c \lambda.
\end{equation}
Multiplying $(\omega_{i i}^{1})^2$ on both sides of (2.14), we have
\begin{equation*}
\frac{1}{3} e_{1}\left(\left(\omega_{i i}^{1}\right)^{3}\right)=\left(\omega_{i i}^{1}\right)^{4}+\lambda \lambda_{i}\left(\omega_{i i}^{1}\right)^{2}+c\left(\omega_{i i}^{1}\right)^{2}.
\end{equation*}
Taking the sum of $i$ from 2 to 4, we have
\begin{equation}
\begin{aligned}
f_{4} &=\frac{1}{3} e_{1}\left(f_{3}\right)-\lambda g_{2}-c f_{2} \\
&=\frac{1}{6} T^{\prime \prime \prime}-\frac{4}{3}\left(\lambda^{2}+c\right) T^{\prime}-\frac{5}{3} \lambda \lambda^{\prime} T+2 \lambda^{\prime 2}+4 \lambda \lambda^{\prime \prime}-2 \lambda^{4}-c \lambda^{2}+3 c^{2}.
\end{aligned}
\end{equation}
Multiplying $\lambda_i$ on both sides of (2.13), we have
\begin{equation*}
\lambda_{i}^{2} \omega_{i i}^{1}=\frac{1}{2} e_{1}\left(\lambda_{i}^{2}\right)+\lambda \lambda_{i} \omega_{i i}^{1}.
\end{equation*}
Using (3.10), we have
\begin{equation}
\begin{aligned}
g_{3}:=\sum_{i=2}^{4} \lambda_{i}^{2} \omega_{i i}^{1} &=\frac{1}{2} e_{1}\left(S-\lambda^{2}\right)+\lambda g_{1} \\
&=\frac{1}{2}\left(\frac{\lambda^{\prime \prime}-\lambda^{\prime} T}{\lambda}-\lambda^{2}\right)^{\prime}+\lambda g_{1} \\
&=-\frac{\lambda^{\prime}}{2 \lambda} T^{\prime}+\left(\lambda^{2}-\frac{\lambda^{\prime \prime} \lambda-\lambda^{\prime 2}}{2 \lambda^{2}}\right) T-4 \lambda \lambda^{\prime}+\frac{\lambda^{\prime \prime \prime} \lambda-\lambda^{\prime \prime} \lambda^{\prime}}{2 \lambda^{2}}.
\end{aligned}
\end{equation}
Differentiating (3.23) with respect to $e_1$ and using (2.13) and (2.14), we have
\begin{equation*}
e_{1}\left(g_{2}\right)=3 \sum_{i=2}^{4} \lambda_{i}\left(\omega_{i i}^{1}\right)^{3}-\lambda \sum_{i=2}^{4}\left(\omega_{i i}^{1}\right)^{3}+2 \lambda \sum_{i=2}^{4} \lambda_{i}^{2} \omega_{i i}^{1}+2 c \sum_{i=2}^{4} \lambda_{i} \omega_{i i}^{1},
\end{equation*}
that is
\begin{equation}
\begin{aligned}
g_{4}:=& \sum_{i=2}^{4} \lambda_{i}\left(\omega_{i i}^{1}\right)^{3} \\
=& \frac{1}{3}\left(e_{1}\left(g_{2}\right)+\lambda f_{3}-2 \lambda g_{3}-2 c g_{1}\right) \\
=& \frac{1}{2} \lambda T^{\prime \prime}+\lambda^{\prime} T^{\prime}+\frac{1}{3}\left(2 \lambda^{\prime \prime}-3 \lambda^{3}-\frac{\lambda^{\prime 2}}{\lambda}-3 c \lambda\right) T \\
&-\lambda^{\prime \prime \prime}+\frac{20}{3} \lambda^{2} \lambda^{\prime}+\frac{\lambda^{\prime \prime} \lambda^{\prime}}{3 \lambda}+\frac{4}{3} c \lambda^{\prime}.
\end{aligned}
\end{equation}
Multiplying $(\omega_{i i}^{1})^3$ on both sides of (2.14), we have
\begin{equation*}
\frac{1}{4} e_{1}\left(\left(\omega_{i i}^{1}\right)^{4}\right)=\left(\omega_{i i}^{1}\right)^{5}+\lambda \lambda_{i}\left(\omega_{i i}^{1}\right)^{3}+c\left(\omega_{i i}^{1}\right)^{3}.
\end{equation*}
Taking the sum of $i$ from 2 to 4, we have
\begin{equation}
\begin{aligned}
f_{5}=& \frac{1}{4} e_{1}\left(f_{4}\right)-\lambda g_{4}-c f_{3} \\
=& \frac{1}{24} T^{\prime \prime \prime \prime}-\frac{5}{6}\left(\lambda^{2}+c\right) T^{\prime \prime}+2 \lambda \lambda^{\prime \prime \prime}+\frac{5}{3} \lambda^{\prime} \lambda^{\prime \prime}-\frac{26}{3} \lambda^{3} \lambda^{\prime}-\frac{47}{6} c \lambda \lambda^{\prime} \\
\quad& -\frac{25}{12} \lambda \lambda^{\prime} T^{\prime}-\frac{1}{12}\left(13 \lambda \lambda^{\prime \prime}+\lambda^{\prime 2}-12 \lambda^{4}-24 c \lambda^{2}-12 c^{2}\right) T.
\end{aligned}
\end{equation}
\end{proof}

\end{lemma}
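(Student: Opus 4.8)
The plan is to express each power sum $f_k=\sum_{i=2}^4(\omega_{ii}^1)^k$ recursively in terms of $T=f_1$, $\lambda=\lambda_1$ and their iterated $e_1$-derivatives, using only the structure equations along the integral curve of $e_1$ from Lemma 2.3 — namely $e_1(\lambda_i)=(\lambda_i-\lambda_1)\omega_{ii}^1$ (equation (2.13)) and $e_1(\omega_{ii}^1)=(\omega_{ii}^1)^2+\lambda_1\lambda_i+c$ (equation (2.14)) — together with the trace identity $\lambda_2+\lambda_3+\lambda_4=-3\lambda$ and the second order equation (2.12). Since $R=12c+16H^2-S$ and $16H^2=4\lambda^2$, equation (2.12) rearranges to
\[
\lambda''=\lambda'T+\lambda(S-4c),\qquad\text{hence}\qquad S-4c=\frac{\lambda''-\lambda'T}{\lambda},
\]
where we may divide because $\lambda'=e_1(\lambda)\neq 0$ by (3.11). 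This last identity is the device that eliminates every occurrence of $S$ in favour of $\lambda,\lambda',\lambda'',T$.

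Alongside the $f_k$ I would carry the auxiliary mixed power sums
\[
g_1=\sum_{i=2}^4\lambda_i\omega_{ii}^1,\quad g_2=\sum_{i=2}^4\lambda_i(\omega_{ii}^1)^2,\quad g_3=\sum_{i=2}^4\lambda_i^2\omega_{ii}^1,\quad g_4=\sum_{i=2}^4\lambda_i(\omega_{ii}^1)^3,
\]
and run a bootstrap that alternates two kinds of steps. Multiplying (2.14) by $(\omega_{ii}^1)^{k-2}$ and summing over $i=2,3,4$ gives, after summation, a relation that expresses $f_k$ through $e_1(f_{k-1})$, the quantity $f_{k-2}$, and one of the auxiliary sums $g_j$ (a multiple of $\lambda$ when $k=2$); dually, differentiating the defining expression of a $g_j$ along $e_1$ and substituting (2.13) and (2.14) yields a linear relation for the next $g$ in terms of $f$'s, lower $g$'s and powers of $\lambda$. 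Explicitly: $f_1=T$; summing (2.14) and (2.13) over $i$ gives $f_2=T'+3\lambda^2-3c$ and $g_1=\lambda T-3\lambda'$; then $f_3=\tfrac12 e_1(f_2)-\lambda g_1-cT$; differentiating $g_1$ gives $g_2$; then $f_4=\tfrac13 e_1(f_3)-\lambda g_2-c f_2$; multiplying (2.13) by $\lambda_i$ and summing gives $g_3=\tfrac12 e_1(S-\lambda^2)+\lambda g_1$ (this is where the Gauss identity above is invoked to clear $S$); differentiating $g_2$ gives $g_4=\tfrac13\bigl(e_1(g_2)+\lambda f_3-2\lambda g_3-2c g_1\bigr)$; and finally $f_5=\tfrac14 e_1(f_4)-\lambda g_4-c f_3$. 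Substituting back at each stage and clearing $S$ yields the stated closed forms. Throughout one uses silently that $e_i$ annihilates $\lambda$ and each of its $e_1$-derivatives for $i=2,3,4$ (the computation just before this lemma), so that $f_2,\dots,f_5$ are indeed functions of the $e_1$-jet of $\lambda$ and of $T$ alone.

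I do not expect a conceptual obstacle: once $g_1,\dots,g_4$ are introduced the entire computation is forced. The genuine work is the algebraic bookkeeping in the last two steps. The expressions for $g_3$, $g_4$, and hence for $f_5$, acquire terms with factors $1/\lambda$ and $1/\lambda^2$ coming from the Gauss substitution, and the main thing to verify is that these recombine into the polynomial expression in $\lambda,\lambda',\lambda'',\lambda''',T,T',T'',\dots$ asserted in the statement, with the displayed rational coefficients (for instance $-\tfrac{26}{3}$, $-\tfrac{47}{6}$, $-\tfrac{25}{12}$ in $f_5$) coming out correctly; this is the delicate part, and it is essentially the computation carried out by Fu-Hong-Zhan in the corresponding step of [FHZ].
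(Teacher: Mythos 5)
Your proposal is correct and follows essentially the same route as the paper: it defines $T$ from (2.12) via $S-4c=(\lambda''-\lambda'T)/\lambda$, introduces the same auxiliary sums $g_1,\dots,g_4$, and derives the identical recursions $f_k=\tfrac{1}{k-1}e_1(f_{k-1})-\lambda g_j-cf_{k-2}$ together with the $g$-recursions obtained by differentiating along $e_1$ and substituting (2.13)--(2.14). The only remaining work, as you note, is the bookkeeping that produces the stated rational coefficients, which is exactly what the paper carries out.
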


Following the argument of Fu-Hong-Zhan in [FHZ], we can prove
\begin{lemma}
The function $T$ satisfies $e_i(T)=0$ for $i=2,3,4$.
\end{lemma}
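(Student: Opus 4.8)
The plan is to reduce the claim to proving $e_i(S)=0$ for $i=2,3,4$. By the formula $T=\frac{\lambda''}{\lambda'}-\frac{\lambda}{\lambda'}(S-4c)$ obtained at the start of the proof of Lemma~3.3, together with the identities $e_i(\lambda)=e_i(\lambda')=e_i(\lambda'')=0$ for $i=2,3,4$ (which follow from (3.11) and the computations carried out just before Lemma~3.3), one gets
\[
e_i(T)=-\frac{\lambda}{\lambda'}\,e_i(S),\qquad i=2,3,4 .
\]
In view of the Gauss equation (3.2) this is equivalent to the assertion that the scalar curvature $R$ is constant along $e_2,e_3,e_4$, so the whole difficulty lies in proving $e_i(S)=0$.

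To analyse $e_i(S)$ I would express it through the connection coefficients. From $\lambda_1=-2H$ and $e_i(H)=0$ (see (3.3)) we get $e_i(\lambda_1)=0$, so (3.9) yields $e_i(\lambda_2)+e_i(\lambda_3)+e_i(\lambda_4)=0$, while (3.8) gives $e_i(\lambda_j)=(\lambda_j-\lambda_i)\,\omega_{jj}^i$ for $j\neq i$. A short computation then gives
\[
\frac{1}{2}\,e_i(S)=\sum_{j=2}^{4}\lambda_j\,e_i(\lambda_j)=\sum_{j\in\{2,3,4\}\setminus\{i\}}(\lambda_j-\lambda_i)^2\,\omega_{jj}^i ,
\]
so the lemma is equivalent to proving $\sum_{j\neq i}(\lambda_j-\lambda_i)^2\,\omega_{jj}^i=0$ for each $i=2,3,4$.

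To establish these identities I would exploit the full integrability conditions. Since $\mathbb{R}^5(c)$ has constant curvature, the Gauss equation (2.4) forces $R(e_a,e_b)e_c=0$ whenever $a,b,c$ are pairwise distinct; expanding $R(e_i,e_1)e_j=0$ and $R(e_i,e_j)e_k=0$ (for distinct $i,j,k\in\{2,3,4\}$) through the connection formulas of Lemma~3.1 and using the Codazzi equation (2.5), one reads off relations such as $e_1(\omega_{jj}^i)=\omega_{jj}^1\,\omega_{jj}^i$, $e_i(\omega_{jj}^1)=(\omega_{jj}^1-\omega_{ii}^1)\,\omega_{jj}^i$, $(\lambda_j-\lambda_k)\,\omega_{ij}^k=(\lambda_i-\lambda_k)\,\omega_{ji}^k$, and $\omega_{ij}^k(\omega_{jj}^1-\omega_{kk}^1)=\omega_{ji}^k(\omega_{ii}^1-\omega_{kk}^1)$. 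Applying $e_i$ to (2.12)--(2.14) (with $n=4$) and commuting $e_i$ past $e_1$ via $[e_i,e_1]=-\omega_{ii}^1 e_i$ produces further differential identities; applying $e_i$ to the Fu--Hong--Zhan relations (3.13)--(3.14), and then using Lemma~3.3 to rewrite each $e_i(f_k)$ in terms of $e_i(T)$ and its $e_1$-derivatives, yields ordinary differential equations satisfied by $e_i(T)$ along the integral curves of $e_1$. Assembling all of this, together, if needed, with a case analysis according to whether the off-diagonal coefficients $\omega_{ij}^k$ (with $i,j,k$ pairwise distinct) vanish, should force $\sum_{j\neq i}(\lambda_j-\lambda_i)^2\,\omega_{jj}^i=0$, and hence $e_i(T)=0$.

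The step I expect to be the main obstacle is precisely this last one. The desired vanishing is not captured by any single structure equation, but must be assembled from several of them at once, and the off-diagonal connection coefficients $\omega_{ij}^k$ — which do not appear in the functions $f_k$ and therefore behave as extra, only indirectly controlled unknowns — make the elimination delicate. Keeping the bookkeeping manageable, and disposing of the degenerate subcases, will be the technical heart of the proof.
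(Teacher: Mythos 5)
Your opening reduction is fine: since $T=\frac{\lambda''}{\lambda'}-\frac{\lambda}{\lambda'}(S-4c)$ and $e_i$ kills $\lambda,\lambda',\lambda''$ by (3.11) and (3.15), one indeed has $e_i(T)=-\frac{\lambda}{\lambda'}e_i(S)$, and the computation $\tfrac12 e_i(S)=\sum_{j\neq i}(\lambda_j-\lambda_i)^2\omega_{jj}^i$ is correct. But this reduction is not where the difficulty lies, and the step you yourself flag as the ``main obstacle'' --- forcing $\sum_{j\neq i}(\lambda_j-\lambda_i)^2\omega_{jj}^i=0$ out of the Gauss and Codazzi equations --- is exactly the missing proof. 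The structure equations alone cannot deliver this: the individual vanishing $\omega_{jj}^i=0$ (equivalently $e_j(\lambda_i)=0$ for $2\le i,j\le 4$) is the content of the \emph{next} lemma in the paper (Lemma 3.5), and it is deduced \emph{from} $e_i(T)=0$, not the other way around. Your plan therefore runs the logic backwards. Moreover, the variant you sketch --- applying $e_i$ to (3.13)--(3.14) and commuting past $e_1$ --- only produces a homogeneous linear ODE system for $e_i(T)$ along the integral curves of $e_1$, which has plenty of nonzero solutions; without an additional algebraic constraint pinning $T$ down, nothing forces $e_i(T)=0$.

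The paper's actual mechanism is different and is the essential idea you are missing: one shows that $T$ is an explicit \emph{rational function of $\lambda$ and its $e_1$-derivatives alone}, whence $e_i(T)=0$ follows immediately from (3.11), (3.15), (3.16). Concretely, substituting the expressions of Lemma 3.3 into the two symmetric-function identities of Lemma 3.2 yields two ODEs in $T$ and $\lambda$ along $e_1$; repeatedly differentiating along $e_1$ and eliminating $T'''',T''',T''$ produces $a_1T'-a_1T^2+a_2T+a_3=0$ with $a_1,a_2,a_3$ depending only on $\lambda,\lambda',\dots$, and a further differentiation/elimination gives $b_1T+b_2=0$. In the nondegenerate cases one solves $T=-a_3/a_2$ or $T=-b_2/b_1$ and is done; the degenerate cases ($a_1=a_2=0$, or $b_1=b_2=0$) are ruled out by eliminating derivatives of $\lambda$ to obtain a nontrivial constant-coefficient polynomial in $\lambda$, contradicting $e_1(\lambda)\neq 0$. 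Note also that this whole argument uses the biharmonic equation (2.12) and the Newton-type relations (3.13)--(3.14) in an essential way; these inputs are absent from your proposed derivation. As written, your proposal is a plausible-sounding plan whose decisive step is unexecuted and, in the form proposed, would not go through.
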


\begin{proof}
Assume that $T \neq 0$. Substituting (3.17) into (3.13) and (3.14), we have
\begin{equation}
\begin{aligned}
&9 c^2+10 c T^2+T^4-48 c \lambda^2-26 T^2 \lambda^2+39 \lambda^4\\
-&10 c T^\prime-6 T^2 T^\prime+26 \lambda^2 T^\prime+3 T^{\prime 2}+58 T \lambda \lambda^\prime-12 \lambda^{\prime 2}\\
+&4 T T^{\prime\prime}-24 \lambda \lambda^{\prime\prime}-T^{\prime\prime\prime}=0,
\end{aligned}
\end{equation}
\begin{equation}
\begin{aligned}
&36 c^2 T+40 c T^3+4 T^5-48 c T \lambda^2-80 T^3 \lambda^2-84 T \lambda^4\\
-&20 c T T^\prime-20 T^3 T^\prime-20 T \lambda^2 T^\prime-172 c \lambda \lambda^\prime+120 T^2 \lambda \lambda^\prime+568 \lambda^3 \lambda^\prime\\
+&170 \lambda T^\prime \lambda^\prime+2 T \lambda^{\prime 2}-10 c T^{\prime\prime}+10 T^2 T^{\prime\prime}+50 \lambda^2 T^{\prime\prime}+10 T^\prime T^{\prime\prime}\\
+&26 T \lambda \lambda^{\prime\prime}-40 \lambda^\prime \lambda^{\prime\prime}-48 \lambda \lambda^{\prime\prime\prime}-T^{\prime\prime\prime\prime}=0.
\end{aligned}
\end{equation}
Differentiating (3.28) with respect to $e_1$, we have
\begin{equation}
\begin{aligned}
&20 c T T^\prime+4 T^3 T^\prime-52 T \lambda^2 T^\prime-12 T T^{\prime 2}-96 c \lambda \lambda^\prime-52 T^2 \lambda \lambda^\prime\\
+&156 \lambda^3 \lambda^\prime+110 \lambda T^\prime \lambda^\prime+58 T \lambda^{\prime 2}-10 c T^{\prime\prime}-6 T^2 T^{\prime\prime}+26 \lambda^2 T^{\prime\prime}\\
+&10 T^\prime T^{\prime\prime}+58 T \lambda \lambda^{\prime\prime}-48 \lambda^\prime \lambda^{\prime\prime}+4 T T^{\prime\prime\prime}-24 \lambda \lambda^{\prime\prime\prime}-T^{\prime\prime\prime\prime}=0.
\end{aligned}
\end{equation}
From (3.29) and (3.30), eliminating $T^{\prime\prime\prime\prime}$ we have
\begin{equation}
\begin{aligned}
-&36 c^2 T-40 c T^3-4 T^5+48 c T \lambda^2+80 T^3 \lambda^2+84 T \lambda^4+40 c T T^\prime\\
+&24 T^3 T^\prime-32 T \lambda^2 T^\prime-12 T T^{\prime 2}+76 c \lambda \lambda^\prime-172 T^2 \lambda \lambda^\prime-412 \lambda^3 \lambda^\prime\\
-&60 \lambda T^\prime \lambda^\prime+56 T \lambda^{\prime 2}-16 T^2 T^{\prime\prime}-24 \lambda^2 T^{\prime\prime}+32 T \lambda \lambda^{\prime\prime}-8 \lambda^\prime \lambda^{\prime\prime}\\
+&4 T T^{\prime\prime\prime}+24 \lambda \lambda^{\prime\prime\prime}=0.
\end{aligned}
\end{equation}
From (3.28) and (3.31), eliminating $T^{\prime\prime\prime}$ we have
\begin{equation}
\begin{aligned}
-&36 c T \lambda^2-6 T^3 \lambda^2+60 T \lambda^4+18 T \lambda^2 T^\prime+19 c \lambda \lambda^\prime+15 T^2 \lambda \lambda^\prime\\
-&103 \lambda^3 \lambda^\prime-15 \lambda T^\prime \lambda^\prime+2 T \lambda^{\prime 2}-6 \lambda^2 T^{\prime\prime}-16 T \lambda \lambda^{\prime\prime}-2 \lambda^\prime \lambda^{\prime\prime}+6 \lambda \lambda^{\prime\prime\prime}=0.
\end{aligned}
\end{equation}
Differentiating (3.32) with respect to $e_1$, we have
\begin{equation}
\begin{aligned}
-&36 c \lambda^2 T^\prime-18 T^2 \lambda^2 T^\prime+60 \lambda^4 T^\prime+18 \lambda^2 T^{\prime 2}-72 c T \lambda \lambda^\prime-12 T^3 \lambda \lambda^\prime\\
+&240 T \lambda^3 \lambda^\prime+66 T \lambda T^\prime \lambda^\prime+19 c \lambda^{\prime 2}+15 T^2 \lambda^{\prime 2}-309 \lambda^2 \lambda^{\prime 2}-13 T^\prime \lambda^{\prime 2}\\
+&18 T \lambda^2 T^{\prime\prime}-27 \lambda \lambda^\prime T^{\prime\prime}+19 c \lambda \lambda^{\prime\prime}+15 T^2 \lambda \lambda^{\prime\prime}-103 \lambda^3 \lambda^{\prime\prime}-31 \lambda T^\prime \lambda^{\prime\prime}\\
-&12 T \lambda^\prime \lambda^{\prime\prime}-2 \lambda^{\prime\prime 2}-6 \lambda^2 T^{\prime\prime\prime}-16 T \lambda \lambda^{\prime\prime\prime}+4 \lambda^\prime \lambda^{\prime\prime\prime}+6 \lambda \lambda^{\prime\prime\prime\prime}=0.
\end{aligned}
\end{equation}
From (3.28) and (3.33), eliminating $T^{\prime\prime\prime}$ we have
\begin{equation}
\begin{aligned}
&54 c^2 \lambda^2+60 c T^2 \lambda^2+6 T^4 \lambda^2-288 c \lambda^4-156 T^2 \lambda^4+234 \lambda^6\\
-&24 c \lambda^2 T^\prime-18 T^2 \lambda^2 T^\prime+96 \lambda^4 T^\prime+72 c T \lambda \lambda^\prime+12 T^3 \lambda \lambda^\prime+108 T \lambda^3 \lambda^\prime\\
-&66 T \lambda T^\prime \lambda^\prime-19 c \lambda^{\prime 2}-15 T^2 \lambda^{\prime 2}+237 \lambda^2 \lambda^{\prime 2}+13 T^\prime \lambda^{\prime 2}+6 T \lambda^2 T^{\prime\prime}\\
+&27 \lambda \lambda^\prime T^{\prime\prime}-19 c \lambda \lambda^{\prime\prime}-15 T^2 \lambda \lambda^{\prime\prime}-41 \lambda^3 \lambda^{\prime\prime}+31 \lambda T^\prime \lambda^{\prime\prime}+12 T \lambda^\prime \lambda^{\prime\prime}\\
+&2 \lambda^{\prime\prime 2}+16 T \lambda \lambda^{\prime\prime\prime}-4 \lambda^\prime \lambda^{\prime\prime\prime}-6 \lambda \lambda^{\prime\prime\prime\prime}=0.
\end{aligned}
\end{equation}
From (3.32) and (3.34), eliminating $T^{\prime\prime}$ we have
\begin{equation}
a_{1} T^{\prime}-a_{1} T^{2}+a_{2} T+a_{3}=0,
\end{equation}
where
\begin{equation*}
\begin{aligned}
a_{1}=&62 \lambda^{2} \lambda^{\prime \prime}-109 \lambda \lambda^{\prime 2}+192 \lambda^{5}-48 c \lambda^{3}, \\
a_{2}=&44 \lambda^{2} \lambda^{\prime \prime \prime}-124 \lambda \lambda^{\prime} \lambda^{\prime \prime}+550 \lambda^{4} \lambda^{\prime}+18 \lambda^{\prime 3}-142 c \lambda^{2} \lambda^{\prime}, \\
a_{3}=&-12 \lambda^{2} \lambda^{\prime \prime \prime \prime}+46 \lambda \lambda^{\prime} \lambda^{\prime \prime \prime}-82 \lambda^{4} \lambda^{\prime \prime}+4 \lambda \lambda^{\prime \prime 2} \\
 & -18 \lambda^{\prime 2} \lambda^{\prime \prime}-453 \lambda^{3} \lambda^{\prime 2}+468 \lambda^{7}+108 c^{2} \lambda^{3}-576 c \lambda^{5}+133 c \lambda \lambda^{\prime 2}-38 c \lambda^{2} \lambda^{\prime \prime}.
\end{aligned}
\end{equation*}
\begin{description}
\item[Case (i)] $a_1=0,a_2 \neq 0.$ (3.35) becomes
\end{description}
\begin{equation*}
a_{2} T+a_{3}=0.
\end{equation*}
so we have $T=-\dfrac{a_3}{a_2}$, then $e_i(T)=0$ for $i=2,3,4$. 
\begin{description}
\item[Case (ii)] $a_1=0,a_2=0$. We have
\end{description}
\begin{equation}
62 \lambda \lambda^{\prime\prime} -109  \lambda^{\prime 2} +192 \lambda^4 -48 c \lambda^2=0,
\end{equation}
\begin{equation}
44 \lambda^2 \lambda^{\prime\prime\prime}  -124 \lambda \lambda^\prime \lambda^{\prime\prime}  +550 \lambda^4 \lambda^\prime  +18 \lambda^{\prime 3} -142 c \lambda^2 \lambda^\prime    =0.
\end{equation}
Differentiating (3.36) with respect to $e_1$, from (3.37) we have
\begin{equation}
-1145 c \lambda^2+77 \lambda^4+279 \lambda^{\prime 2}-206 \lambda \lambda^{\prime\prime}=0.
\end{equation}
From (3.36) and (3.38), eliminating $\lambda^{\prime\prime}$ we have
\begin{equation}
-40439 c \lambda^2+22163 \lambda^4-2578 \lambda^{\prime 2}=0.
\end{equation}
Differentiating (3.39) with respect to $e_1$, we have
\begin{equation}
-40439 c \lambda+44326 \lambda^3-2578 \lambda^{\prime\prime}=0.
\end{equation}
Substituting (3.39) and (3.40) into (3.36), we have
\begin{equation*}
1776889 c + 827421 \lambda^2 = 0,
\end{equation*}
which is a contradiction. 
\begin{description}
\item[Case (iii)] $a_1 \neq 0$. Differentiating (3.35) with respect to $e_1$, we have
\end{description}
\begin{equation}
\begin{aligned}
&96 c T \lambda^3 T^\prime-384 T \lambda^5 T^\prime+324 c^2 \lambda^2 \lambda^\prime+144 c T^2 \lambda^2 \lambda^\prime-2880 c \lambda^4 \lambda^\prime \\
-&960 T^2 \lambda^4 \lambda^\prime+3276 \lambda^6 \lambda^\prime-286 c \lambda^2 T^\prime \lambda^\prime+1510 \lambda^4 T^\prime \lambda^\prime-284 c T \lambda \lambda^{\prime 2} \\
+&2200 T \lambda^3 \lambda^{\prime 2}+218 T \lambda T^\prime \lambda^{\prime 2}+133 c \lambda^{\prime 3}+109 T^2 \lambda^{\prime 3}-1359 \lambda^2 \lambda^{\prime 3}\\
-&91 T^\prime \lambda^{\prime 3} -48 c \lambda^3 T^{\prime\prime}+192 \lambda^5 T^{\prime\prime}-109 \lambda \lambda^{\prime 2} T^{\prime\prime}-142 c T \lambda^2 \lambda^{\prime\prime}\\
+&550 T \lambda^4 \lambda^{\prime\prime}-124 T \lambda^2 T^\prime \lambda^{\prime\prime} +190 c \lambda \lambda^\prime \lambda^{\prime\prime}+94 T^2 \lambda \lambda^\prime \lambda^{\prime\prime}-1234 \lambda^3 \lambda^\prime \lambda^{\prime\prime}\\
-&218 \lambda T^\prime \lambda^\prime \lambda^{\prime\prime}-70 T \lambda^{\prime 2} \lambda^{\prime\prime} +62 \lambda^2 T^{\prime\prime} \lambda^{\prime\prime}-124 T \lambda \lambda^{\prime\prime 2}-32 \lambda^\prime \lambda^{\prime\prime 2} \\
-&38 c \lambda^2 \lambda^{\prime\prime\prime}-62 T^2 \lambda^2 \lambda^{\prime\prime\prime} -82 \lambda^4 \lambda^{\prime\prime\prime}+106 \lambda^2 T^\prime \lambda^{\prime\prime\prime}-36 T \lambda \lambda^\prime \lambda^{\prime\prime\prime}\\
+&28 \lambda^{\prime 2} \lambda^{\prime\prime\prime}+54 \lambda \lambda^{\prime\prime} \lambda^{\prime\prime\prime} +44 T \lambda^2 \lambda^{\prime\prime\prime\prime}+22 \lambda \lambda^\prime \lambda^{\prime\prime\prime\prime}-12 \lambda^2 \lambda^{\prime\prime\prime\prime\prime}=0.
\end{aligned}
\end{equation}
From (3.32) and (3.41), eliminating $T^{\prime\prime}$ we have
\begin{equation}
\begin{aligned}
&1728 c^2 T \lambda^4+288 c T^3 \lambda^4-9792 c T \lambda^6-1152 T^3 \lambda^6\\
+&11520 T \lambda^8-288 c T \lambda^4 T^\prime+1152 T \lambda^6 T^\prime+1032 c^2 \lambda^3 \lambda^\prime\\
+&144 c T^2 \lambda^3 \lambda^\prime-8688 c \lambda^5 \lambda^\prime-2880 T^2 \lambda^5 \lambda^\prime-120 \lambda^7 \lambda^\prime\\
-&996 c \lambda^3 T^\prime \lambda^\prime+6180 \lambda^5 T^\prime \lambda^\prime+2124 c T \lambda^2 \lambda^{\prime 2}+654 T^3 \lambda^2 \lambda^{\prime 2}\\
+&7044 T \lambda^4 \lambda^{\prime 2}-654 T \lambda^2 T^\prime \lambda^{\prime 2}-1273 c \lambda \lambda^{\prime 3}-981 T^2 \lambda \lambda^{\prime 3}\\
+&3073 \lambda^3 \lambda^{\prime 3}+1089 \lambda T^\prime \lambda^{\prime 3}-218 T \lambda^{\prime 4}-2316 c T \lambda^3 \lambda^{\prime\prime}\\
-&372 T^3 \lambda^3 \lambda^{\prime\prime}+3948 T \lambda^5 \lambda^{\prime\prime}+372 T \lambda^3 T^\prime \lambda^{\prime\prime}+2414 c \lambda^2 \lambda^\prime \lambda^{\prime\prime}\\
+&1494 T^2 \lambda^2 \lambda^\prime \lambda^{\prime\prime}-14174 \lambda^4 \lambda^\prime \lambda^{\prime\prime}-2238 \lambda^2 T^\prime \lambda^\prime \lambda^{\prime\prime}+1448 T \lambda \lambda^{\prime 2} \lambda^{\prime\prime}\\
+&218 \lambda^{\prime 3} \lambda^{\prime\prime}-1736 T \lambda^2 \lambda^{\prime\prime 2}-316 \lambda \lambda^\prime \lambda^{\prime\prime 2}-516 c \lambda^3 \lambda^{\prime\prime\prime}\\
-&372 T^2 \lambda^3 \lambda^{\prime\prime\prime}+660 \lambda^5 \lambda^{\prime\prime\prime}+636 \lambda^3 T^\prime \lambda^{\prime\prime\prime}-216 T \lambda^2 \lambda^\prime \lambda^{\prime\prime\prime}\\
-&486 \lambda \lambda^{\prime 2} \lambda^{\prime\prime\prime}+696 \lambda^2 \lambda^{\prime\prime} \lambda^{\prime\prime\prime}+264 T \lambda^3 \lambda^{\prime\prime\prime\prime}+132 \lambda^2 \lambda^\prime \lambda^{\prime\prime\prime\prime}-72 \lambda^3 \lambda^{\prime\prime\prime\prime\prime}=0.
\end{aligned}
\end{equation}
From (3.35) and (3.42), eliminating $T^{\prime}$ we have
\begin{equation}
b_1 T+b_2=0,
\end{equation}
where
\begin{equation*}
\begin{aligned}
b_1=-&6480 c^3 \lambda^6+63936 c^2 \lambda^8-204336 c \lambda^{10}+209088 \lambda^{12}\\
-&40350 c^2 \lambda^4 \lambda^{\prime 2}+237750 c \lambda^6 \lambda^{\prime 2}-309288 \lambda^8 \lambda^{\prime 2}+4812 c \lambda^2 \lambda^{\prime 4}\\
-&227013 \lambda^4 \lambda^{\prime 4}+520 \lambda^{\prime 6}+20898 c^2 \lambda^5 \lambda^{\prime\prime}-125856 c \lambda^7 \lambda^{\prime\prime}\\
+&174078 \lambda^9 \lambda^{\prime\prime}-25773 c \lambda^3 \lambda^{\prime 2} \lambda^{\prime\prime}+302157 \lambda^5 \lambda^{\prime 2} \lambda^{\prime\prime}-975 \lambda \lambda^{\prime 4} \lambda^{\prime\prime}\\
-&5622 c \lambda^4 \lambda^{\prime\prime 2}-7830 \lambda^6 \lambda^{\prime\prime 2}+1350 \lambda^2 \lambda^{\prime 2} \lambda^{\prime\prime 2}-13640 \lambda^3 \lambda^{\prime\prime 3}\\
+&19719 c \lambda^4 \lambda^\prime \lambda^{\prime\prime\prime}-89523 \lambda^6 \lambda^\prime \lambda^{\prime\prime\prime}-717 \lambda^2 \lambda^{\prime 3} \lambda^{\prime\prime\prime}+18354 \lambda^3 \lambda^\prime \lambda^{\prime\prime} \lambda^{\prime\prime\prime}\\
-&3498 \lambda^4 \lambda^{\prime\prime\prime 2}-2016 c \lambda^5 \lambda^{\prime\prime\prime\prime}+8064 \lambda^7 \lambda^{\prime\prime\prime\prime}-4578 \lambda^3 \lambda^{\prime 2} \lambda^{\prime\prime\prime\prime}\\
+&2604 \lambda^4 \lambda^{\prime\prime} \lambda^{\prime\prime\prime\prime},
\end{aligned}
\end{equation*}
\begin{equation*}
\begin{aligned}
b_2=\quad&7254 c^3 \lambda^5 \lambda^\prime-78246 c^2 \lambda^7 \lambda^\prime+295434 c \lambda^9 \lambda^\prime-364410 \lambda^{11} \lambda^\prime\\
-&4566 c^2 \lambda^3 \lambda^{\prime 3}-11349 c \lambda^5 \lambda^{\prime 3}+361623 \lambda^7 \lambda^{\prime 3}-760 c \lambda \lambda^{\prime 5}\\
+&19795 \lambda^3 \lambda^{\prime 5}+18996 c^2 \lambda^4 \lambda^\prime \lambda^{\prime\prime}-66342 c \lambda^6 \lambda^\prime \lambda^{\prime\prime}-146838 \lambda^8 \lambda^\prime \lambda^{\prime\prime}\\
-&3926 c \lambda^2 \lambda^{\prime 3} \lambda^{\prime\prime}+120509 \lambda^4 \lambda^{\prime 3} \lambda^{\prime\prime}-520 \lambda^{\prime 5} \lambda^{\prime\prime}+10472 c \lambda^3 \lambda^\prime \lambda^{\prime\prime 2}\\
-&143462 \lambda^5 \lambda^\prime \lambda^{\prime\prime 2}+415 \lambda \lambda^{\prime 3} \lambda^{\prime\prime 2}-1330 \lambda^2 \lambda^\prime \lambda^{\prime\prime 3}-5490 c^2 \lambda^5 \lambda^{\prime\prime\prime}\\
+&29448 c \lambda^7 \lambda^{\prime\prime\prime}-21366 \lambda^9 \lambda^{\prime\prime\prime}+5100 c \lambda^3 \lambda^{\prime 2} \lambda^{\prime\prime\prime}-20178 \lambda^5 \lambda^{\prime 2} \lambda^{\prime\prime\prime}\\
+&360 \lambda \lambda^{\prime 4} \lambda^{\prime\prime\prime}-5154 c \lambda^4 \lambda^{\prime\prime} \lambda^{\prime\prime\prime}+28338 \lambda^6 \lambda^{\prime\prime} \lambda^{\prime\prime\prime}+1050 \lambda^2 \lambda^{\prime 2} \lambda^{\prime\prime} \lambda^{\prime\prime\prime}\\
+&5076 \lambda^3 \lambda^{\prime\prime 2} \lambda^{\prime\prime\prime}-3657 \lambda^3 \lambda^\prime \lambda^{\prime\prime\prime 2}-2286 c \lambda^4 \lambda^\prime \lambda^{\prime\prime\prime\prime}+12438 \lambda^6 \lambda^\prime \lambda^{\prime\prime\prime\prime}\\
-&165 \lambda^2 \lambda^{\prime 3} \lambda^{\prime\prime\prime\prime}-2334 \lambda^3 \lambda^\prime \lambda^{\prime\prime} \lambda^{\prime\prime\prime\prime}+954 \lambda^4 \lambda^{\prime\prime\prime} \lambda^{\prime\prime\prime\prime}+432 c \lambda^5 \lambda^{\prime\prime\prime\prime\prime}\\
-&1728 \lambda^7 \lambda^{\prime\prime\prime\prime\prime}+981 \lambda^3 \lambda^{\prime 2} \lambda^{\prime\prime\prime\prime\prime}-558 \lambda^4 \lambda^{\prime\prime} \lambda^{\prime\prime\prime\prime\prime}.
\end{aligned}
\end{equation*}
If $b_1 \neq 0$, then $T=-\dfrac{b_2}{b_1}$, and $e_i(T)=0$ for $i=2,3,4$. If $b_1=0$, then $b_2=0$. Using the similar technique in [FHZ] and (3.15), (3.16), we can eliminate $\lambda^{\prime\prime\prime\prime\prime},\lambda^{\prime\prime\prime\prime},\lambda^{\prime\prime\prime},\lambda^{\prime\prime},\lambda^{\prime}$ and get a nontrivial polynomial of $\lambda$ with constant coefficients. Thus $\lambda$ is a constant, which is a contradiction.
\end{proof}

Following the argument of Fu-Hong-Zhan in [FHZ], we can get
\begin{lemma}

Suppose $M^4$ has four distinct principal curvatures, then $e_j(\lambda_i)=0$ for $2 \leq i, j \leq 4$. 

\end{lemma}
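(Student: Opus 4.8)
The plan is to propagate the vanishing $e_i(T)=0$ of Lemma 3.4 through all its $e_1$-derivatives and through the power sums $f_1,f_2,f_3$, to extract from this a Vandermonde linear system forcing $e_j(\omega_{ii}^1)=0$, and then to differentiate the Riccati equation (2.14) for $\omega_{ii}^1$ in the directions $e_j$ in order to reach the conclusion $e_j(\lambda_i)=0$.

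First I would record the commutator identity coming from Lemma 3.1: since $\nabla_{e_1}e_i=0$ and $\nabla_{e_i}e_1=-\omega_{ii}^1 e_i$ by (3.4)--(3.5), one has $[e_i,e_1]=-\omega_{ii}^1 e_i$ for $i=2,3,4$. Consequently, whenever a function $g$ satisfies $e_i(g)=0$, then $e_i(e_1 g)=e_1(e_i g)-\omega_{ii}^1 e_i(g)=0$, and by iteration $e_i(g^{(k)})=0$ for every $k\geq 0$, where $g^{(k)}$ denotes the $k$-th $e_1$-derivative. Applying this with $g=T$ (Lemma 3.4), and recalling from (3.15)--(3.16) that $e_i(\lambda^{(k)})=0$, the formulas of Lemma 3.3, $f_2=T'+3\lambda^2-3c$ and $f_3=\frac{1}{2}T''-(\lambda^2+c)T+6\lambda\lambda'$, give at once $e_i(f_1)=e_i(f_2)=e_i(f_3)=0$ for $i=2,3,4$.

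Next, fix $i\in\{2,3,4\}$. Since $f_k=(\omega_{22}^1)^k+(\omega_{33}^1)^k+(\omega_{44}^1)^k$ by (3.12), applying $e_i$ to $f_1,f_2,f_3$ yields the homogeneous system $\sum_{j=2}^4(\omega_{jj}^1)^{k-1}e_i(\omega_{jj}^1)=0$ for $k=1,2,3$, whose coefficient matrix is Vandermonde in $\omega_{22}^1,\omega_{33}^1,\omega_{44}^1$. On the set $\{\lambda_1\neq 0\}$, which is open and dense since $e_1(\lambda_1)\neq 0$, these three quantities are pairwise distinct: if $\omega_{jj}^1=\omega_{kk}^1$ held on an open set, then (2.14) would force $\lambda_1(\lambda_j-\lambda_k)=0$, contradicting the hypothesis of four distinct principal curvatures. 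Hence the Vandermonde determinant is nonzero there, so $e_i(\omega_{jj}^1)=0$ for all $i,j\in\{2,3,4\}$ on $\{\lambda_1\neq 0\}$, and then everywhere by continuity.

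Finally I would differentiate (2.14), $e_1(\omega_{ii}^1)=(\omega_{ii}^1)^2+\lambda_1\lambda_i+c$, along $e_j$ with $j\in\{2,3,4\}$. By the commutator identity the left-hand side becomes $e_1(e_j(\omega_{ii}^1))-\omega_{jj}^1 e_j(\omega_{ii}^1)$, which vanishes by the previous step, while the right-hand side becomes $2\omega_{ii}^1 e_j(\omega_{ii}^1)+\lambda_1 e_j(\lambda_i)+\lambda_i e_j(\lambda_1)=\lambda_1 e_j(\lambda_i)$, using $e_j(\lambda_1)=0$ from (3.11). Thus $\lambda_1 e_j(\lambda_i)=0$, so $e_j(\lambda_i)=0$ on $\{\lambda_1\neq 0\}$ and, by continuity, on the whole neighbourhood, for all $2\leq i,j\leq 4$ (the case $i=j$ is included, the argument applying verbatim). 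Everything after Lemma 3.4 is essentially mechanical once the commutator identity is in hand; the only point needing care is the invertibility of the Vandermonde matrix, i.e. the pairwise distinctness of $\omega_{22}^1,\omega_{33}^1,\omega_{44}^1$, which is exactly where (2.14) together with $\lambda_1\neq 0$ is used, and with it the harmless passage between $\{\lambda_1\neq 0\}$ and the full neighbourhood.
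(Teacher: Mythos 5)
Your argument is correct and is essentially the proof the paper itself omits, deferring to [FHZ]: propagate $e_i(T)=0$ through all $e_1$-derivatives via the bracket relation $[e_i,e_1]=-\omega_{ii}^1 e_i$ from Lemma 3.1, kill $e_i(\omega_{jj}^1)$ with the Vandermonde system coming from $f_1,f_2,f_3$, and then read $\lambda_1\,e_j(\lambda_i)=0$ off the Riccati equation (2.14). The only imprecision is the assertion that $\omega_{22}^1,\omega_{33}^1,\omega_{44}^1$ are pairwise distinct everywhere on $\{\lambda_1\neq 0\}$ --- your use of (2.14) only rules out their coincidence on an open set --- but this gives pairwise distinctness on a dense open subset, so the continuity/density step you already invoke closes that gap.
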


Moreover, from (3.8) we have $\omega_{i i}^j = 0$ for  $2 \leq i, j \leq 4$.

\section{Proof of Theorem 1.1}

We need the following two lemmas proved by Fu and Hong in [FH].

\begin{lemma}([FH])

\begin{equation}
\omega_{23}^{4}\left(\lambda_{3}-\lambda_{4}\right)=\omega_{32}^{4}\left(\lambda_{2}-\lambda_{4}\right)=\omega_{43}^{2}\left(\lambda_{3}-\lambda_{2}\right),
\end{equation}

\begin{equation}
\omega_{23}^{4} \omega_{32}^{4}+\omega_{34}^{2} \omega_{43}^{2}+\omega_{24}^{3} \omega_{42}^{3}=0,
\end{equation}

\begin{equation}
\omega_{23}^{4}\left(\omega_{33}^{1}-\omega_{44}^{1}\right)=\omega_{32}^{4}\left(\omega_{22}^{1}-\omega_{44}^{1}\right)=\omega_{43}^{2}\left(\omega_{33}^{1}-\omega_{22}^{1}\right).
\end{equation}

\end{lemma}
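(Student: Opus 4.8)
The plan is to write down all the Levi-Civita connection coefficients of $M^4$ explicitly and substitute them into the Codazzi and Gauss equations. By Lemma~3.1 together with the relations $e_j(\lambda_i)=0$ and $\omega_{ii}^j=0$ for $2\le i,j\le 4$ obtained at the end of Section~3, the connection is completely determined: $\nabla_{e_1}e_i=0$ for all $i$, and for $i=2,3,4$ one has $\nabla_{e_i}e_1=-\omega_{ii}^1 e_i$, $\nabla_{e_i}e_i=\omega_{ii}^1 e_1$, while $\nabla_{e_i}e_j=\omega_{ij}^k e_k$ whenever $\{i,j,k\}=\{2,3,4\}$. The skew-symmetry $\omega_{ij}^k+\omega_{ik}^j=0$ reduces this last group to the three functions $p:=\omega_{23}^4$, $q:=\omega_{32}^4$, $r:=\omega_{43}^2$, with $\omega_{24}^3=-p$, $\omega_{34}^2=-q$, $\omega_{42}^3=-r$.

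To obtain (4.1) I would apply the Codazzi equation $(\nabla_X A)Y=(\nabla_Y A)X$ to the pairs $(e_2,e_3)$ and $(e_2,e_4)$. Using $Ae_i=\lambda_i e_i$, the connection formulas above, and $e_i(\lambda_j)=0$, one computes $(\nabla_{e_2}A)e_3=e_2(\lambda_3)e_3+p(\lambda_3-\lambda_4)e_4$ and $(\nabla_{e_3}A)e_2=e_3(\lambda_2)e_2+q(\lambda_2-\lambda_4)e_4$, so the $e_4$-component gives $p(\lambda_3-\lambda_4)=q(\lambda_2-\lambda_4)$; the pair $(e_2,e_4)$ yields, from its $e_3$-component, $p(\lambda_3-\lambda_4)=r(\lambda_3-\lambda_2)$, and together these are (4.1). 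Equation (4.2) is then purely algebraic: after substituting the skew-symmetries it reads $pq+pr-qr=0$, and since the $\lambda_i$ are pairwise distinct the product $(\lambda_3-\lambda_4)(\lambda_2-\lambda_4)(\lambda_3-\lambda_2)$ is nonzero; multiplying $pq+pr-qr$ by it and writing $\mu:=p(\lambda_3-\lambda_4)=q(\lambda_2-\lambda_4)=r(\lambda_3-\lambda_2)$ one gets $\mu^2\bigl[(\lambda_3-\lambda_2)+(\lambda_2-\lambda_4)-(\lambda_3-\lambda_4)\bigr]=0$, whose bracket vanishes identically, forcing $pq+pr-qr=0$.

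For (4.3) I would use the Gauss equation (2.4). Since $\langle Ae_i,e_j\rangle=\lambda_i\langle e_i,e_j\rangle=0$ for $i\ne j$, it gives $R(e_2,e_3)e_4=0$ and $R(e_2,e_4)e_3=0$. Expanding $R(X,Y)Z=\nabla_X\nabla_Y Z-\nabla_Y\nabla_X Z-\nabla_{[X,Y]}Z$ with the connection coefficients above (here $[e_2,e_3]=(p-q)e_4$, $[e_2,e_4]=(r-p)e_3$), the $e_1$-component of $R(e_2,e_3)e_4=0$ is $p\omega_{33}^1-q\omega_{22}^1-(p-q)\omega_{44}^1=0$, i.e. $p(\omega_{33}^1-\omega_{44}^1)=q(\omega_{22}^1-\omega_{44}^1)$, and the $e_1$-component of $R(e_2,e_4)e_3=0$ gives $p(\omega_{33}^1-\omega_{44}^1)=r(\omega_{33}^1-\omega_{22}^1)$; these two identities are precisely (4.3). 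The whole argument is elementary; the only real work is bookkeeping — keeping the skew-symmetry signs $\omega_{ij}^k=-\omega_{ik}^j$ straight, choosing the index pairs and tensor components that isolate the desired relations, and checking that the terms containing $e_i$-derivatives of $p,q,r$ and $\omega_{jj}^1$ land in components orthogonal to the ones producing (4.1) and (4.3), so that they do not interfere. The one genuinely pleasant point is that (4.2) needs no geometric input beyond (4.1).
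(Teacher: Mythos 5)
Your proposal is correct. Note that the paper itself gives no proof of this lemma --- it is quoted from [FH] --- so there is nothing internal to compare against; but your derivation is the expected one and all the computations check out: with $\omega_{ii}^{j}=0$ and $e_j(\lambda_i)=0$ for $2\le i,j\le 4$ (legitimately available here, since Lemma 3.4 and the remark following it precede Section 4), the $e_4$- and $e_3$-components of the Codazzi equations for the pairs $(e_2,e_3)$ and $(e_2,e_4)$ give exactly (4.1), and the $e_1$-components of $R(e_2,e_3)e_4=0$ and $R(e_2,e_4)e_3=0$ give exactly (4.3). Your treatment of (4.2) is a genuinely nicer route than re-expanding a curvature component: writing $\mu=p(\lambda_3-\lambda_4)=q(\lambda_2-\lambda_4)=r(\lambda_3-\lambda_2)$ and multiplying $pq+pr-qr$ by the nonzero product $(\lambda_3-\lambda_4)(\lambda_2-\lambda_4)(\lambda_3-\lambda_2)$ yields $\mu^{2}\bigl[(\lambda_3-\lambda_2)+(\lambda_2-\lambda_4)-(\lambda_3-\lambda_4)\bigr]=0$ identically, so (4.2) is a purely algebraic consequence of (4.1) and the distinctness of the principal curvatures. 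The only thing worth flagging is that your argument is tied to the simplified connection of this paper (it would not literally reproduce the proof in [FH], where these identities are established in a setting where $\omega_{ii}^{j}$ need not vanish a priori), but for the purposes of this paper that is entirely adequate.
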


\begin{lemma}([FH])

\begin{equation}
\omega_{22}^{1} \omega_{33}^{1}-2 \omega_{23}^{4} \omega_{32}^{4}=-\lambda_{2} \lambda_{3}-c,
\end{equation}

\begin{equation}
\omega_{22}^{1} \omega_{44}^{1}-2 \omega_{24}^{3} \omega_{42}^{3}=-\lambda_{2} \lambda_{4}-c,
\end{equation}

\begin{equation}
\omega_{33}^{1} \omega_{44}^{1}-2 \omega_{34}^{2} \omega_{43}^{2}=-\lambda_{3} \lambda_{4}-c.
\end{equation}

\end{lemma}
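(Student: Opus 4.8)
To establish the three identities (4.4)--(4.6) the plan is to read each of them off from a single ``horizontal'' sectional curvature, computed in two ways. Fix $i\neq j$ in $\{2,3,4\}$ and let $k$ be the remaining index. On one side, applying the Gauss equation (2.5) with $X=e_i$, $Y=Z=e_j$ gives $R(e_i,e_j)e_j=(c+\lambda_i\lambda_j)e_i$, hence $\langle R(e_i,e_j)e_j,e_i\rangle=c+\lambda_i\lambda_j$. On the other side, one expands $R(e_i,e_j)e_j$ directly from the definition (2.7) and reads off its $e_i$-component, using the covariant-derivative formulas of Lemma 3.1 together with the vanishing $\omega_{aa}^b=0$ for $2\le a,b\le 4$ recorded after Lemma 3.4 (Lemma 3.1 itself already gives $\omega_{ab}^1=0$ for distinct $a,b\in\{2,3,4\}$).

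Carrying this out for $(i,j,k)=(2,3,4)$, Lemma 3.1 gives $\nabla_{e_3}e_3=\omega_{33}^1e_1$, $\nabla_{e_2}e_1=-\omega_{22}^1e_2$, $\nabla_{e_2}e_3=\omega_{23}^4e_4$, $\nabla_{e_3}e_4=\omega_{34}^2e_2$, $\nabla_{e_4}e_3=\omega_{43}^2e_2$ and $[e_2,e_3]=(\omega_{23}^4-\omega_{32}^4)e_4$, whence
\[
\langle R(e_2,e_3)e_3,e_2\rangle=-\omega_{22}^1\omega_{33}^1-\omega_{23}^4\omega_{34}^2-(\omega_{23}^4-\omega_{32}^4)\omega_{43}^2 .
\]
Equating this with $c+\lambda_2\lambda_3$, it only remains to simplify the two off-diagonal terms. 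The skew-symmetry $\omega_{ab}^c=-\omega_{ac}^b$ from (2.12) gives $\omega_{23}^4\omega_{34}^2=-\omega_{23}^4\omega_{32}^4$, while substituting the same skew-symmetries into the quadratic relation (4.2) of Lemma 4.1 yields $(\omega_{23}^4-\omega_{32}^4)\omega_{43}^2=-\omega_{23}^4\omega_{32}^4$; together these collapse the identity to $\omega_{22}^1\omega_{33}^1-2\omega_{23}^4\omega_{32}^4=-\lambda_2\lambda_3-c$, i.e.\ (4.4). Repeating the same computation for the triples $(2,4,3)$ and $(3,4,2)$ --- the whole configuration is symmetric under permutations of $\{2,3,4\}$ --- gives (4.5) and (4.6).

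I do not expect a real obstacle here; the argument is essentially bookkeeping with the connection coefficients. The one subtlety worth flagging is that the naive expansion of $R(e_i,e_j)e_j$ produces, besides the expected $\omega_{ii}^1\omega_{jj}^1$, a product $\omega_{ij}^k\omega_{jk}^i$ and a commutator contribution $(\omega_{ij}^k-\omega_{ji}^k)\omega_{kj}^i$, rather than the tidy single term $-2\omega_{ij}^k\omega_{ji}^k$ of the statement; merging these genuinely uses Lemma 4.1 (the quadratic identity (4.2), or equivalently (4.1)), not merely the antisymmetry of the $\omega$'s. One should also fix the sign conventions once at the outset: with the curvature convention of (2.7), the sectional curvature of the plane spanned by $e_i$ and $e_j$ is $\langle R(e_i,e_j)e_j,e_i\rangle$, which (2.5) evaluates to $c+\lambda_i\lambda_j$ with the product-of-principal-curvatures term entering with a plus sign.
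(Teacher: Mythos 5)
Your proposal is correct. The paper itself offers no proof of this lemma --- it is quoted verbatim from [FH] --- but your derivation is the standard one and is complete: extracting the $e_i$-component of $R(e_i,e_j)e_j$ from (2.6) via Lemma 3.1 and the vanishing of $\omega_{aa}^b$ ($2\le a,b\le 4$), equating it with the $c+\lambda_i\lambda_j$ produced by the Gauss equation (2.4), and then using the skew-symmetry $\omega_{ij}^k=-\omega_{ik}^j$ together with the quadratic relation (4.2) of Lemma 4.1 to merge $\omega_{ij}^k\omega_{jk}^i$ and the commutator term into $-2\omega_{ij}^k\omega_{ji}^k$; I verified the bookkeeping for $(i,j,k)=(2,3,4)$ and the other two cases follow by the symmetry you note. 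Your remark that (4.2) is genuinely needed (antisymmetry alone does not collapse the off-diagonal terms) is the right point to flag.
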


\begin{description}
\item[The proof of Theorem 1.1]  
\end{description}
Let $M^4$ be a biharmonic hypersurface in $\mathbb{R}^{5}(c)(c \neq 0)$. There exists a smooth function $a$ such that
\begin{equation}
\omega_{23}^{4}=a(\lambda_2-\lambda_3)(\lambda_2-\lambda_4).
\end{equation}
From (4.1) and (4.7) we have
\begin{equation}
\omega_{34}^{2}=a(\lambda_3-\lambda_4)(\lambda_3-\lambda_2),
\end{equation}
\begin{equation}
\omega_{42}^{3}=a(\lambda_4-\lambda_2)(\lambda_4-\lambda_3).
\end{equation}
Taking $X=e_1,Y=e_2,Z=e_3$ in Gauss equation (2.4), we have
\begin{equation}
\begin{aligned}
e_1(a) = -&\frac{a}{3(\lambda_2-\lambda_3)(\lambda_2-\lambda_4)}\{(5 \lambda_2^2+\lambda_2 \lambda_3-\lambda_3^2+\lambda_2 \lambda_4-5 \lambda_3 \lambda_4-\lambda_4^2)\omega_{22}^{1} \\
 +& (-\lambda_2^2 - 4 \lambda_2 \lambda_3 + 4 \lambda_3 \lambda_4 + \lambda_4^2)\omega_{33}^{1}  \\
 +& (-\lambda_2^2 + \lambda_3^2 - 4 \lambda_2 \lambda_4 + 4 \lambda_3 \lambda_4)\omega_{44}^{1}\}.
\end{aligned}
\end{equation}
Taking $X=e_1,Y=e_3,Z=e_2$  in Gauss equation (2.4), we have
\begin{equation}
\begin{aligned}
e_1(a) = -&\frac{a}{3(\lambda_2-\lambda_3)(\lambda_3-\lambda_4)}\{(4 \lambda_2 \lambda_3 + \lambda_3^2 - 4 \lambda_2 \lambda_4 - \lambda_4^2)\omega_{22}^{1} \\
 +& (\lambda_2^2 - \lambda_2 \lambda_3 - 5 \lambda_3^2 + 5 \lambda_2 \lambda_4 - \lambda_3 \lambda_4 + \lambda_4^2)\omega_{33}^{1}  \\
 +& (-\lambda_2^2 + \lambda_3^2 - 4 \lambda_2 \lambda_4 + 4 \lambda_3 \lambda_4)\omega_{44}^{1}\}.
\end{aligned}
\end{equation}
Taking  $X=e_1,Y=e_4,Z=e_2$ in Gauss equation (2.4), we have
\begin{equation}
\begin{aligned}
e_1(a) = -&\frac{a}{3(\lambda_2-\lambda_4)(\lambda_3-\lambda_4)}\{(4 \lambda_2 \lambda_3 + \lambda_3^2 - 4 \lambda_2 \lambda_4 - \lambda_4^2)\omega_{22}^{1} \\
 +& (\lambda_2^2 + 4 \lambda_2 \lambda_3 - 4 \lambda_3 \lambda_4 - \lambda_4^2)\omega_{33}^{1}  \\
 +& (-\lambda_2^2 - 5 \lambda_2 \lambda_3 - \lambda_3^2 + \lambda_2 \lambda_4 + \lambda_3 \lambda_4 + 5 \lambda_4^2)\omega_{44}^{1}\}.
\end{aligned}
\end{equation}
(4.10)+(4.11)+(4.12) implies
\begin{equation}
e_1(a)=\frac{a}{9(\lambda_2-\lambda_3)(\lambda_2-\lambda_4)(\lambda_3-\lambda_4)}(k_2\omega_{22}^{1} -k_3\omega_{33}^{1} +k_4\omega_{44}^{1}),
\end{equation}
where
\begin{equation*}
\begin{aligned}
k_2=&(\lambda_3 - \lambda_4) (-13 \lambda_2^2 + 2 \lambda_3^2 + 7 \lambda_3 \lambda_4 + 2 \lambda_4^2 + \lambda_2 \lambda_3 + \lambda_2 \lambda_4),\\
k_3=&(\lambda_2 - \lambda_4) (2 \lambda_2^2 - 13 \lambda_3^2 + \lambda_3 \lambda_4 + 2 \lambda_4^2 + \lambda_2 \lambda_3 + 7 \lambda_2 \lambda_4),\\
k_4=&(\lambda_2 - \lambda_3) (2 \lambda_2^2 + 7 \lambda_2 \lambda_3 + 2 \lambda_3^2 + \lambda_2 \lambda_4 + \lambda_3 \lambda_4 - 13 \lambda_4^2).
\end{aligned}
\end{equation*}
Taking $X=e_2,Y=e_4,Z=e_1$ in Gauss equation (2.4), we have
\begin{equation}
a\{(\lambda_3-\lambda_4)\omega_{22}^{1}-(\lambda_2-\lambda_4)\omega_{33}^{1}+(\lambda_2-\lambda_3)\omega_{44}^{1}\}=0.
\end{equation}
We can rewrite the biharmonic equation (2.7) as
\begin{equation}
-e_1e_1(\lambda_1)+e_1(\lambda_1)(\omega_{22}^{1}+\omega_{33}^{1}+\omega_{44}^{1})+\lambda_1(8c+4\lambda_1^2-R)=0.
\end{equation}
Taking $X=e_2,Y=e_4,Z=e_2$ in Gauss equation (2.4), we have $e_2(a)=0$ and
\begin{equation}
\omega_{22}^{1}\omega_{44}^{1}+2 a^2 (\lambda_2 - \lambda_3) (\lambda_2 - \lambda_4)^2 (\lambda_3 - \lambda_4) + \lambda_2 \lambda_4 + c=0.
\end{equation}
By symmetry, we have $e_3(a)=e_4(a)=0$ and
\begin{equation}
\omega_{33}^{1}\omega_{44}^{1}-2 a^2 (\lambda_2 - \lambda_3) (\lambda_2 - \lambda_4) (\lambda_3 - \lambda_4)^2 + \lambda_3 \lambda_4 + c=0,
\end{equation}
\begin{equation}
\omega_{22}^{1}\omega_{33}^{1}+\lambda_2 \lambda_3 - 2 a^2 (\lambda_2 - \lambda_3)^2 (\lambda_2 - \lambda_4) (\lambda_3 - \lambda_4) + c=0.
\end{equation}
Here we introduce the new variables $y_1,y_2,y_3$ by
\begin{equation}
y_1=\lambda_2+\lambda_3+\lambda_4=-3\lambda_1,
\end{equation}
\begin{equation}
y_2=\lambda_2 \lambda_3+\lambda_2 \lambda_4+\lambda_3 \lambda_4,
\end{equation}
\begin{equation}
y_3=\lambda_2 \lambda_3 \lambda_4.
\end{equation}
Next we consider two cases.
\begin{description}
\item[Case A] $a \neq 0$. From (4.14) we have
\begin{equation}
(\lambda_3-\lambda_4)\omega_{22}^{1}-(\lambda_2-\lambda_4)\omega_{33}^{1}+(\lambda_2-\lambda_3)\omega_{44}^{1}=0.
\end{equation}
\end{description}
From (4.1) and (4.3), there exists smooth functions $\kappa$ and $\tau$ such that
\begin{equation}
\omega_{ii}^{1}=\kappa \lambda_i + \tau, \quad\quad i=2,3,4.
\end{equation}
From (2.13) and (2.14), we have
\begin{equation}
e_1(\kappa)=-\frac{1}{3} (1 + \kappa^2) (\lambda_2 + \lambda_3 + \lambda_4) + \kappa \tau,
\end{equation}
\begin{equation}
e_1(\tau)=c-\frac{1}{3}\kappa \tau (\lambda_2 + \lambda_3 + \lambda_4)+\tau^2.
\end{equation}
(4.16)+(4.17)+(4.18) implies
\begin{equation}
3c+(1+\kappa^2)y_2+2\kappa\tau y_1+3\tau^2=0.
\end{equation}
From (4.26) we can solve for $y_2$ 
\begin{equation}
y_2=-\frac{1}{1+\kappa^2}(3c+2\kappa\tau y_1+3\tau^2).
\end{equation}
From (2.13), (2.14), (4.23), we have
\begin{equation}
e_1(y_1)=\frac{4}{3}\kappa y_1^2-2\kappa y_2+2\tau y_1.
\end{equation}
From (4.24), (4.25), (4.27), (4.28), we have
\begin{equation}
\begin{aligned}
e_1(y_1^2-2y_2)=&\frac{4}{3(1+\kappa^2)^2}(2 y_1^3 \kappa (1 + \kappa^2)^2 + 15 c y_1 (\kappa + \kappa^3) + 9 c (1 + 2 \kappa^2) \tau \\
+& y_1^2 (1 + \kappa^2) (2 + 13 \kappa^2) \tau + 3 y_1 \kappa (7 + 9 \kappa^2) \tau^2 + 9 (1 + 2 \kappa^2) \tau^3).
\end{aligned}
\end{equation}
From (2.13), (2.14), (4.23), we have
\begin{equation}
e_1(\lambda_2^2+\lambda_3^2+\lambda_4^2)=\frac{2}{3}(4 y_1^3 \kappa - 11 y_1 y_2 \kappa + 9 y_3 \kappa + 4 y_1^2 \tau - 6 y_2 \tau).
\end{equation}
From (4.29) and (4.30), we have
\begin{equation}
-c y_1 - 3 y_3 - c y_1 \kappa^2 - 6 y_3 \kappa^2 - 3 y_3 \kappa^4 +  6 c \kappa \tau - y_1 \tau^2 + 3 y_1 \kappa^2 \tau^2 +  6 \kappa \tau^3=0.
\end{equation}
From (4.31), we can solve for $y_3$ 
\begin{equation}
y_3=\frac{1}{3(1+\kappa^2)^2}(-c(1+\kappa^2)y_1+6c\kappa\tau+(3\kappa^2-1)\tau^2 y_1+6\kappa \tau^3).
\end{equation}
Using (4.23), (4.15) can be rewritten as
\begin{equation}
27 c y_1 - 7 y_1^3 + 12 y_1 y_2 + 8 y_1^3 \kappa^2 - 27 y_1 y_2 \kappa^2 + 27 y_3 \kappa^2 + 6 y_1^2 \kappa \tau - 18 y_2 \kappa \tau = 0.
\end{equation}
Differentiating (4.33) with respect to $e_1$, we have
\begin{equation}
\begin{aligned}
-&684 c y_1^2 \kappa - 100 y_1^4 \kappa + 2106 c^2 \kappa^3 + 324 c y_1^2 \kappa^3 \\
-& 120 y_1^4 \kappa^3 + 1008 c y_1^2 \kappa^5 + 60 y_1^4 \kappa^5 + 80 y_1^4 \kappa^7 \\
-& 324 c y_1 \tau - 108 y_1^3 \tau + 648 c y_1 \kappa^2 \tau - 618 y_1^3 \kappa^2 \tau \\
+& 4806 c y_1 \kappa^4 \tau + 330 y_1^3 \kappa^4 \tau + 840 y_1^3 \kappa^6 \tau - 1062 y_1^2 \kappa \tau^2 \\
+& 5346 c \kappa^3 \tau^2 + 558 y_1^2 \kappa^3 \tau^2 + 3240 y_1^2 \kappa^5 \tau^2 - 486 y_1 \tau^3 \\
+& 324 y_1 \kappa^2 \tau^3 + 5400 y_1 \kappa^4 \tau^3 + 3240 \kappa^3 \tau^4 = 0.
\end{aligned}
\end{equation}
Differentiating (4.34) with respect to $e_1$, we have
\begin{equation}
\begin{aligned}
-&972 c^2 y_1 + 360 c y_1^3 + 100 y_1^5 - 29970 c^2 y_1 \kappa^2 \\
-& 14454 c y_1^3 \kappa^2 - 1040 y_1^5 \kappa^2 + 15390 c^2 y_1 \kappa^4 - 18684 c y_1^3 \kappa^4 \\
-& 3000 y_1^5 \kappa^4 + 44388 c^2 y_1 \kappa^6 + 7434 c y_1^3 \kappa^6 - 1760 y_1^5 \kappa^6 \\
+& 11304 c y_1^3 \kappa^8 + 820 y_1^5 \kappa^8 + 720 y_1^5 \kappa^{10} - 5832 c^2 \kappa \tau \\
-& 24732 c y_1^2 \kappa \tau - 2652 y_1^4 \kappa \tau + 62694 c^2 \kappa^3 \tau - 77112 c y_1^2 \kappa^3 \tau \\
-& 18294 y_1^4 \kappa^3 \tau + 137538 c^2 \kappa^5 \tau + 59022 c y_1^2 \kappa^5 \tau - 16992 y_1^4 \kappa^5 \tau \\
+& 111402 c y_1^2 \kappa^7 \tau + 10290 y_1^4 \kappa^7 \tau + 11640 y_1^4 \kappa^9 \tau - 7290 c y_1 \tau^2 \\
-& 1206 y_1^3 \tau^2 - 77436 c y_1 \kappa^2 \tau^2 - 35964 y_1^3 \kappa^2 \tau^2 + 158922 c y_1 \kappa^4 \tau^2 \\
-& 58194 y_1^3 \kappa^4 \tau^2 + 333396 c y_1 \kappa^6 \tau^2 + 48564 y_1^3 \kappa^6 \tau^2 + 72000 y_1^3 \kappa^8 \tau^2 \\
-& 14580 c \kappa \tau^3 - 29268 y_1^2 \kappa \tau^3 + 136566 c \kappa^3 \tau^3 - 89154 y_1^2 \kappa^3 \tau^3 \\
+& 302778 c \kappa^5 \tau^3 + 115074 y_1^2 \kappa^5 \tau^3 + 213840 y_1^2 \kappa^7 \tau^3 - 7290 y_1 \tau^4 \\
-& 54270 y_1 \kappa^2 \tau^4 + 144180 y_1 \kappa^4 \tau^4 + 304560 y_1 \kappa^6 \tau^4 - 8748 \kappa \tau^5 \\
+& 73872 \kappa^3 \tau^5 + 165240 \kappa^5 \tau^5 = 0.
\end{aligned}
\end{equation}
From (4.33) and (4.34), eliminating $\tau$ we have
\begin{equation}
\sum_{m=0}^{8}P_{2m} \kappa^{2m} = 0,
\end{equation}
where
\newpage
\begin{equation*}
\begin{aligned}
P_0 = -&164025 c^3 y_1^4 - 149445 c^2 y_1^6 - 17739 c y_1^8 - 567 y_1^{10},\\
P_2 = -& 157464 c^4 y_1^2  + 988524 c^3 y_1^4  - 1879848 c^2 y_1^6  + 449388 c y_1^8  - 18072 y_1^{10},\\
P_4 = \quad& 6114852 c^4 y_1^2  + 8627715 c^3 y_1^4  - 4223745 c^2 y_1^6 + 79353 c y_1^8  - 8223 y_1^{10},\\
P_6 = \quad& 6141096 c^5  - 46189440 c^4 y_1^2 + 84187836 c^3 y_1^4  - 36738684 c^2 y_1^6  \\
+& 4146300 c y_1^8  - 172964 y_1^{10},\\
P_8 = \quad& 6141096 c^5  - 223047756 c^4 y_1^2  + 137387340 c^3 y_1^4  - 83992140 c^2 y_1^6 \\
+& 12535884 c y_1^8  - 488104 y_1^{10},  \\
P_{10} = -& 35311302 c^5  + 54154494 c^4 y_1^2 - 49487436 c^3 y_1^4  - 69538500 c^2 y_1^6  \\
+& 11334546 c y_1^8  - 413882 y_1^{10},  \\
P_{12} = -& 18423288 c^5  + 1192107456 c^4 y_1^2  - 355203792 c^3 y_1^4  - 6360768 c^2 y_1^6  \\
+& 3415752 c y_1^8  - 121088 y_1^{10},  \\
P_{14} = \quad&  55269864 c^5  + 1533226968 c^4 y_1^2 - 345382704 c^3 y_1^4  + 18635184 c^2 y_1^6  \\
+& 46152 c y_1^8  - 15496 y_1^{10}, \\
P_{16} = \quad&  587865600 c^4 y_1^2  - 105629184 c^3 y_1^4  + 5505408 c^2 y_1^6  - 36864 c y_1^8  - 2432 y_1^{10}.  
\end{aligned}
\end{equation*}
From (4.33) and (4.35), eliminating $\tau$ we have
\begin{equation}
\sum_{m=0}^{13}Q_{2m} \kappa^{2m} = 0,
\end{equation}
where
\begin{equation*}
\begin{aligned}
Q_0= \quad& 89813529 c^4 y_1^4 + 623321244 c^3 y_1^6 + 1108270998 c^2 y_1^8 \\
+& 92935836 c y_1^{10} + 1996569 y_1^{12},\\
Q_2= \quad& 7620155352 c^5 y_1^2  + 18958390038 c^4 y_1^4  - 2051878392 c^3 y_1^6  \\
+& 18386833140 c^2 y_1^8  - 3132184464 c y_1^{10}  + 199117734 y_1^{12},\\
Q_4=  \quad& 2550916800 c^6  - 3713237316 c^5 y_1^2  + 396683144775 c^4 y_1^4  \\
-& 281478327804 c^3 y_1^6  + 162873940914 c^2 y_1^8  - 23218046136 c y_1^{10}  \\
+& 1170436927 y_1^{12},\\
Q_6= \quad& 21533989320 c^6  - 253030869900 c^5 y_1^2  + 2983350761604 c^4 y_1^4  \\
-& 2233514241576 c^3 y_1^6  + 897578511552 c^2 y_1^8  - 110846416140 c y_1^{10}  \\
+& 5032123316 y_1^{12},\\
Q_8= \quad& 169801776792 c^6  - 4722853102668 c^5 y_1^2  + 19054644126723 c^4 y_1^4  \\
-& 11292218972532 c^3 y_1^6  + 3695744541258 c^2 y_1^8  - 455003760600 c y_1^{10}  \\
+& 19351326643 y_1^{12},\\
Q_{10}= \quad& 554978521890 c^6  - 30549126042468 c^5 y_1^2  + 83834398712052 c^4 y_1^4 \\ 
-& 42196006098576 c^3 y_1^6   + 11288209375170 c^2 y_1^8  - 1281849786108 c y_1^{10}  \\
+& 48941467416 y_1^{12},\\
\end{aligned}
\end{equation*}
\newpage
\begin{equation*}
\begin{aligned}
Q_{12}=-& 310363669764 c^6  - 57933464951484 c^5 y_1^2  + 211026987943857 c^4 y_1^4  \\
-& 105270974689716 c^3 y_1^6  + 23637191018346 c^2 y_1^8  - 2258485653816 c y_1^{10}  \\
+& 72974080657 y_1^{12},\\
Q_{14}=-& 4681635955884 c^6  + 135848247734196 c^5 y_1^2  + 275115121235820 c^4 y_1^4  \\
-& 165192051148632 c^3 y_1^6  + 32239821679308 c^2 y_1^8  - 2470452821820 c y_1^{10}  \\
+& 62750560596 y_1^{12},\\
Q_{16}=-& 5901463112004 c^6  + 844079357467764 c^5 y_1^2  + 106282529065260 c^4 y_1^4  \\
-& 157860516846024 c^3 y_1^6  + 27762314926644 c^2 y_1^8  - 1657847338092 c y_1^{10}  \\
+& 29261950884 y_1^{12},\\
Q_{18}= \quad& 6145346701314 c^6  + 1705659405062004 c^5 y_1^2  - 177158182581522 c^4 y_1^4  \\
-& 87108572161368 c^3 y_1^6  + 14676006828366 c^2 y_1^8  - 656114701212 c y_1^{10}  \\
+& 5566963010 y_1^{12},\\
Q_{20}= \quad& 17073860098680 c^6  + 1765021213832760 c^5 y_1^2  - 262032349848480 c^4 y_1^4  \\
-& 25663081979952 c^3 y_1^6  + 4778135352216 c^2 y_1^8  - 147573429768 c y_1^{10}  \\
-& 751835760 y_1^{12},\\
Q_{22}= \quad& 9338536521864 c^6  + 941182952949648 c^5 y_1^2  - 130448854754568 c^4 y_1^4  \\
-& 5532982699104 c^3 y_1^6  + 1153495908792 c^2 y_1^8  - 23152890672 c y_1^{10}  \\
-& 611376440 y_1^{12},\\
Q_{24}= \quad& 205652969940096 c^5 y_1^2  - 16668899694720 c^4 y_1^4  - 2570855428224 c^3 y_1^6  \\
+& 271365697152 c^2 y_1^8  - 3749437440 c y_1^{10}  - 144665088 y_1^{12},\\
Q_{26}= \quad& 3762339840000 c^4 y_1^4  - 676026777600 c^3 y_1^6  + 35234611200 c^2 y_1^8  \\
-& 235929600 c y_1^{10}  - 15564800 y_1^{12}.
\end{aligned}
\end{equation*}
From (4.36), (4.37), eliminating $\kappa$ we get a polynomial of $y_1$ with constant coefficients of degree 428 and then $y_1$ is a constant, which is a contradiction.
\begin{description}
\item[Case B] $a = 0$. (4.4)-(4.6) becomes
\end{description}
\begin{equation}
\omega_{22}^{1} \omega_{33}^{1}=-\lambda_{2} \lambda_{3}-c,
\end{equation}
\begin{equation}
\omega_{22}^{1} \omega_{44}^{1}=-\lambda_{2} \lambda_{4}-c,
\end{equation}
\begin{equation}
\omega_{33}^{1} \omega_{44}^{1}=-\lambda_{3} \lambda_{4}-c.
\end{equation}
From the assumption $c \neq 0$ and (4.38)-(4.40), we can get $\omega_{ii}^{1} \neq 0$ and $\lambda_i \lambda_j+c \neq 0$ for $2 \leq i,j \leq 4, i \neq j$, then
\begin{equation}
\omega_{33}^{1} =-\frac{\lambda_{2} \lambda_{3}+c}{\omega_{22}^{1}},
\end{equation}
\begin{equation}
\omega_{44}^{1} =-\frac{\lambda_{2} \lambda_{4}+c}{\omega_{22}^{1}},
\end{equation}
\begin{equation}
(\omega_{22}^{1})^2 =-\frac{(\lambda_{2} \lambda_{3}+c)(\lambda_{2} \lambda_{4}+c)}{\lambda_{3} \lambda_{4}+c}.
\end{equation}
From (2.13), (2.14), (4.41)-(4.43), we have
\begin{equation}
e_1(y_1)=-\frac{1}{3\omega_{22}^{1}(\lambda_3 \lambda_4+c)}(6 c^2 y_1 + 5 c y_1 y_2 - 9 c y_3 + 4 y_1^2 y_3 - 6 y_2 y_3),
\end{equation}
\begin{equation}
e_1(y_2)=-\frac{1}{3\omega_{22}^{1}(\lambda_3 \lambda_4+c)}(2 c^2 y_1^2 + 6 c^2 y_2 + c y_1^2 y_2 + 6 c y_2^2 - 3 c y_1 y_3 + 5 y_1 y_2 y_3 - 9 y_3^2),
\end{equation}
\begin{equation}
e_1(y_3)=-\frac{1}{3\omega_{22}^{1}(\lambda_3 \lambda_4+c)}(c^2 y_1 y_2 + 9 c^2 y_3 + 2 c y_1^2 y_3 + 6 c y_2 y_3 + 6 y_1 y_3^2).
\end{equation}
Using (4.41)-(4.43), (4.15) can be rewritten as
\begin{equation}
\begin{aligned}
&27 c^4 y_1 - 7 c^3 y_1^3 + 36 c^3 y_1 y_2 - 7 c^2 y_1^3 y_2 + 7 c^2 y_1 y_2^2 + 27 c^3 y_3 \\
+& 27 c^2 y_1^2 y_3 - 7 c y_1^4 y_3 + 45 c^2 y_2 y_3 + c y_1^2 y_2 y_3 + 24 c y_2^2 y_3 + 54 c y_1 y_3^2 \\
-& 15 y_1^3 y_3^2 + 39 y_1 y_2 y_3^2 - 27 y_3^3 = 0.
\end{aligned}
\end{equation}
Differentiating (4.47) with respect to $e_1$, we have
\begin{equation}
\begin{aligned}
&162 c^6 y_1 - 54 c^5 y_1^3 - 14 c^4 y_1^5 + 594 c^5 y_1 y_2 - 182 c^4 y_1^3 y_2 - 14 c^3 y_1^5 y_2 \\
+& 567 c^4 y_1 y_2^2 - 132 c^3 y_1^3 y_2^2 + 143 c^3 y_1 y_2^3 + 900 c^4 y_1^2 y_3 - 238 c^3 y_1^4 y_3 - 14 c^2 y_1^6 y_3 \\
+& 351 c^4 y_2 y_3 + 1395 c^3 y_1^2 y_2 y_3 - 328 c^2 y_1^4 y_2 y_3 + 765 c^3 y_2^2 y_3 + 420 c^2 y_1^2 y_2^2 y_3 + 390 c^2 y_2^3 y_3 \\
+& 513 c^3 y_1 y_3^2 + 444 c^2 y_1^3 y_3^2 - 214 c y_1^5 y_3^2 + 1890 c^2 y_1 y_2 y_3^2 - 23 c y_1^3 y_2 y_3^2 + 1269 c y_1 y_2^2 y_3^2 \\
-& 1620 c^2 y_3^3 + 981 c y_1^2 y_3^3 - 360 y_1^4 y_3^3 - 1593 c y_2 y_3^3 + 1089 y_1^2 y_2 y_3^3 - 234 y_2^2 y_3^3 - 837 y_1 y_3^4 = 0.
\end{aligned}
\end{equation}
Differentiating (4.48) with respect to $e_1$, we have
\begin{equation}
\begin{aligned}
&972 c^8 y_1 + 216 c^7 y_1^3 - 784 c^6 y_1^5 - 28 c^5 y_1^7 \\
+& 7938 c^7 y_1 y_2 - 1416 c^6 y_1^3 y_2 - 1802 c^5 y_1^5 y_2 - 28 c^4 y_1^7 y_2 \\
+& 17091 c^6 y_1 y_2^2 - 4395 c^5 y_1^3 y_2^2 - 1026 c^4 y_1^5 y_2^2 + 13836 c^5 y_1 y_2^3 \\
-& 2715 c^4 y_1^3 y_2^3 + 3679 c^4 y_1 y_2^4 - 1458 c^7 y_3 + 19926 c^6 y_1^2 y_3 \\
-& 2736 c^5 y_1^4 y_3 - 2000 c^4 y_1^6 y_3 - 28 c^3 y_1^8 y_3 - 1053 c^6 y_2 y_3 \\
+& 65034 c^5 y_1^2 y_2 y_3 - 13479 c^4 y_1^4 y_2 y_3 - 2266 c^3 y_1^6 y_2 y_3 + 11610 c^5 y_2^2 y_3 \\
+& 67221 c^4 y_1^2 y_2^2 y_3 - 11346 c^3 y_1^4 y_2^2 y_3 + 19611 c^4 y_2^3 y_3 + 21341 c^3 y_1^2 y_2^3 y_3 \\
+& 8502 c^3 y_2^4 y_3 - 10287 c^5 y_1 y_3^2 + 40437 c^4 y_1^3 y_3^2 - 11912 c^3 y_1^5 y_3^2 \\
-& 1276 c^2 y_1^7 y_3^2 + 13716 c^4 y_1 y_2 y_3^2 + 72372 c^3 y_1^3 y_2 y_3^2 - 17465 c^2 y_1^5 y_2 y_3^2 \\
+& 64944 c^3 y_1 y_2^2 y_3^2 + 28074 c^2 y_1^3 y_2^2 y_3^2 + 39249 c^2 y_1 y_2^3 y_3^2 - 51516 c^4 y_3^3 \\
+& 3348 c^3 y_1^2 y_3^3 + 13011 c^2 y_1^4 y_3^3 - 9008 c y_1^6 y_3^3 - 115587 c^3 y_2 y_3^3 \\
+& 78183 c^2 y_1^2 y_2 y_3^3 - 304 c y_1^4 y_2 y_3^3 - 80649 c^2 y_2^2 y_3^3 + 68562 c y_1^2 y_2^2 y_3^3 \\
-& 14634 c y_2^3 y_3^3 - 94203 c^2 y_1 y_3^4 + 28710 c y_1^3 y_3^4 - 12240 y_1^5 y_3^4 \\
-& 113724 c y_1 y_2 y_3^4 + 42399 y_1^3 y_2 y_3^4 - 19620 y_1 y_2^2 y_3^4 + 21870 c y_3^5 \\
-& 33237 y_1^2 y_3^5 + 9234 y_2 y_3^5 = 0.
\end{aligned}
\end{equation}
From(4.47), (4.48), eliminating $y_2$ we have
\begin{equation}
\begin{aligned}
&(c y_1 - 3 y_3) (c^3 - c y_1 y_3 - 2 y_3^2) (961551 c^7 y_1^4 - 550638 c^6 y_1^6 \\
+& 92295 c^5 y_1^8 + 1078 c^4 y_1^{10} + 3241134 c^6 y_1^3 y_3 \\
-& 3718062 c^5 y_1^5 y_3 + 852336 c^4 y_1^7 y_3 - 16548 c^3 y_1^9 y_3 \\
+& 2071089 c^5 y_1^2 y_3^2 - 10454265 c^4 y_1^4 y_3^2 + 4078449 c^3 y_1^6 y_3^2 \\
-& 228795 c^2 y_1^8 y_3^2 - 4234032 c^4 y_1 y_3^3 - 9157698 c^3 y_1^3 y_3^3 \\
+& 9454374 c^2 y_1^5 y_3^3 - 621684 c y_1^7 y_3^3 - 4185918 c^3 y_3^4 \\
+& 4435965 c^2 y_1^2 y_3^4 + 10034766 c y_1^4 y_3^4 - 479115 y_1^6 y_3^4 \\
+& 7050888 c y_1 y_3^5 + 4106700 y_1^3 y_3^5 + 2217618 y_3^6) = 0.
\end{aligned}
\end{equation}
From(4.47), (4.49), eliminating $y_2$ we have
\begin{equation}
\begin{aligned}
&(c y_1 - 3 y_3) (c^3 - c y_1 y_3 - 2 y_3^2)(121234158 c^{12} y_1^5 + 4502127582 c^{11} y_1^7 \\
-& 4130694792 c^{10} y_1^9 + 1312681986 c^9 y_1^{11} - 138793970 c^8 y_1^{13} \\
-& 1131900 c^7 y_1^{15} - 6493539798 c^{11} y_1^4 y_3 + 17508226059 c^{10} y_1^6 y_3 \\
-& 26579271393 c^9 y_1^8 y_3 + 12168575541 c^8 y_1^{10} y_3 - 1901077395 c^7 y_1^{12} y_3 \\
+& 41618150 c^6 y_1^{14} y_3 - 19265759766 c^{10} y_1^3 y_3^2 + 4495598658 c^9 y_1^5 y_3^2 \\
-& 42049903410 c^8 y_1^7 y_3^2 + 42284538216 c^7 y_1^9 y_3^2 - 10550910060 c^6 y_1^{11} y_3^2 \\
+& 560212114 c^5 y_1^{13} y_3^2 - 6970891914 c^9 y_1^2 y_3^3 - 13811443002 c^8 y_1^4 y_3^3 \\
+& 83680475085 c^7 y_1^6 y_3^3 + 21298486113 c^6 y_1^8 y_3^3 - 21213196659 c^5 y_1^{10} y_3^3 \\
+& 1830863937 c^4 y_1^{12} y_3^3 + 30101526828 c^8 y_1 y_3^4 + 45869158224 c^7 y_1^3 y_3^4 \\
+& 279229797288 c^6 y_1^5 y_3^4 - 232982629074 c^5 y_1^7 y_3^4 + 23242316040 c^4 y_1^9 y_3^4 \\
-& 602724402 c^3 y_1^{11} y_3^4 + 23531498892 c^7 y_3^5 + 95793204717 c^6 y_1^2 y_3^5 \\
-& 35556908661 c^5 y_1^4 y_3^5 - 505869269463 c^4 y_1^6 y_3^5 + 173198646333 c^3 y_1^8 y_3^5 \\
-& 12359609514 c^2 y_1^{10} y_3^5 - 28067406876 c^5 y_1 y_3^6 - 471693791502 c^4 y_1^3 y_3^6 \\
-& 188063792496 c^3 y_1^5 y_3^6 + 305129824146 c^2 y_1^7 y_3^6 - 20136670848 c y_1^9 y_3^6 \\
-& 99047493522 c^4 y_3^7 - 128261179269 c^3 y_1^2 y_3^7 + 309915643131 c^2 y_1^4 y_3^7 \\
+& 254417953509 c y_1^6 y_3^7 - 9828590625 y_1^8 y_3^7 + 92615758344 c^2 y_1 y_3^8 \\
+& 252963396576 c y_1^3 y_3^8 + 90957709080 y_1^5 y_3^8 \\
+& 45154416006 c y_3^9 + 25402919166 y_1^2 y_3^9) = 0.
\end{aligned}
\end{equation}
Next we check three subcases.
\begin{description}
\item[Case B.1] $c y_1 - 3 y_3 = 0$. Substituting $y_3=\dfrac{c y_1}{3}$ into (4.47), (4.48) gives
\end{description}
\begin{equation}
(9 c + 4 y_1^2 + 9 y_2) (12 c - 3 y_1^2 + 5 y_2)  = 0,
\end{equation}
\begin{equation}
(9 c + 4 y_1^2 + 9 y_2) (162 c^2 + 171 c y_1^2 - 94 y_1^4 + 549 c y_2 + 19 y_1^2 y_2 + 273 y_2^2) = 0.
\end{equation}
From (4.52), (4.53), we consider the two following subcases: \\
(i) $9 c + 4 y_1^2 + 9 y_2 \neq 0$, or \\
(ii) $9 c + 4 y_1^2 + 9 y_2=0$.\\
If (i) holds, by eliminating $y_2$ we can get that $y_1$ satisfies a polynomial with constant coefficients and $y_1$ is a constant. If (ii) holds, taking $9 c + 4 y_1^2 + 9 y_2=0$ into (4.44) we get $e_1(y_1)=0$ and $y_1$ is also a constant. So in both subcases we get that $y_1$ is a constant, which is a contradiction.
\begin{description}
\item[Case B.2] $c^3-c y_1 y_3-2 y_3^2 = 0$. Since $c \neq 0$, we solve for $y_1$ in terms of $y_3$.
\end{description}
Substituting $y_1=\dfrac{c^3 -2y_3^2}{c y_3}$ into (4.47) and (4.48), we have
\begin{equation}
(2 c^3 + c^2 y_2 - y_3^2) (7 c^9 - 65 c^6 y_3^2 - 7 c^5 y_2 y_3^2 + 89 c^3 y_3^4 - 10 c^2 y_2 y_3^4 + 8 y_3^6) = 0,
\end{equation}
\begin{equation}
\begin{aligned}
&(2 c^3 + c^2 y_2 - y_3^2) (14 c^{15} + 106 c^{12} y_3^2 + 132 c^{11} y_2 y_3^2 - 2004 c^9 y_3^4 \\
-& 1493 c^8 y_2 y_3^4 - 143 c^7 y_2^2 y_3^4 + 5885 c^6 y_3^6 + 2429 c^5 y_2 y_3^6 \\
-& 104 c^4 y_2^2 y_3^6 - 3842 c^3 y_3^8 - 68 c^2 y_2 y_3^8 + 192 y_3^{10}) = 0.
\end{aligned}
\end{equation}
From (4.54), (4.55), we consider the two following subcases: \\
(i) $2c^3+c^2 y_2-y_3^2 \neq 0$, or \\
(ii) $2c^3+c^2 y_2-y_3^2=0$.\\
If (i) holds, by eliminating $y_2$ we can get that $y_3$ satisfies a polynomial with constant coefficients and $y_3$ is a constant. If (ii) holds, taking $2c^3+c^2 y_2-y_3^2=0$ into (4.46) we get $e_1(y_3)=0$ and $y_3$ is also a constant. So in both subcases we get that $y_3$ is a constant, then $y_1$ is a constant, which is a contradiction.
\begin{description}
\item[Case B.3] $(c y_1 - 3 y_3)(c^3-c y_1 y_3-2 y_3^2) \neq 0$. From (4.50), (4.51), we have
\end{description}
\begin{equation}
\begin{aligned}
&961551 c^7 y_1^4 - 550638 c^6 y_1^6 + 92295 c^5 y_1^8 + 1078 c^4 y_1^{10} \\
+& 3241134 c^6 y_1^3 y_3 - 3718062 c^5 y_1^5 y_3 + 852336 c^4 y_1^7 y_3 - 16548 c^3 y_1^9 y_3 \\
+& 2071089 c^5 y_1^2 y_3^2 - 10454265 c^4 y_1^4 y_3^2 + 4078449 c^3 y_1^6 y_3^2 - 228795 c^2 y_1^8 y_3^2 \\
-& 4234032 c^4 y_1 y_3^3 - 9157698 c^3 y_1^3 y_3^3 + 9454374 c^2 y_1^5 y_3^3 - 621684 c y_1^7 y_3^3 \\
-& 4185918 c^3 y_3^4 + 4435965 c^2 y_1^2 y_3^4 + 10034766 c y_1^4 y_3^4 - 479115 y_1^6 y_3^4 \\
+& 7050888 c y_1 y_3^5 + 4106700 y_1^3 y_3^5 + 2217618 y_3^6 = 0,
\end{aligned}
\end{equation}
\begin{equation}
\begin{aligned}
&121234158 c^{12} y_1^5 + 4502127582 c^{11} y_1^7 - 4130694792 c^{10} y_1^9 \\
+& 1312681986 c^9 y_1^{11} - 138793970 c^8 y_1^{13} - 1131900 c^7 y_1^{15} \\
-& 6493539798 c^{11} y_1^4 y_3 + 17508226059 c^{10} y_1^6 y_3 - 26579271393 c^9 y_1^8 y_3 \\
+& 12168575541 c^8 y_1^{10} y_3 - 1901077395 c^7 y_1^{12} y_3 + 41618150 c^6 y_1^{14} y_3 \\
-& 19265759766 c^{10} y_1^3 y_3^2 + 4495598658 c^9 y_1^5 y_3^2 - 42049903410 c^8 y_1^7 y_3^2 \\
+& 42284538216 c^7 y_1^9 y_3^2 - 10550910060 c^6 y_1^{11} y_3^2 + 560212114 c^5 y_1^{13} y_3^2 \\
-& 6970891914 c^9 y_1^2 y_3^3 - 13811443002 c^8 y_1^4 y_3^3 + 83680475085 c^7 y_1^6 y_3^3 \\
+& 21298486113 c^6 y_1^8 y_3^3 - 21213196659 c^5 y_1^{10} y_3^3 + 1830863937 c^4 y_1^{12} y_3^3 \\
+& 30101526828 c^8 y_1 y_3^4 + 45869158224 c^7 y_1^3 y_3^4 + 279229797288 c^6 y_1^5 y_3^4 \\
-& 232982629074 c^5 y_1^7 y_3^4 + 23242316040 c^4 y_1^9 y_3^4 - 602724402 c^3 y_1^{11} y_3^4 \\
+& 23531498892 c^7 y_3^5 + 95793204717 c^6 y_1^2 y_3^5 - 35556908661 c^5 y_1^4 y_3^5 \\
-& 505869269463 c^4 y_1^6 y_3^5 + 173198646333 c^3 y_1^8 y_3^5 - 12359609514 c^2 y_1^{10} y_3^5 \\
\end{aligned}
\end{equation}
\newpage
\begin{equation*}
\begin{aligned}
-& 28067406876 c^5 y_1 y_3^6 - 471693791502 c^4 y_1^3 y_3^6 - 188063792496 c^3 y_1^5 y_3^6 \\
+& 305129824146 c^2 y_1^7 y_3^6 - 20136670848 c y_1^9 y_3^6 - 99047493522 c^4 y_3^7 \\
-& 128261179269 c^3 y_1^2 y_3^7 + 309915643131 c^2 y_1^4 y_3^7 + 254417953509 c y_1^6 y_3^7 \\
-& 9828590625 y_1^8 y_3^7 + 92615758344 c^2 y_1 y_3^8 + 252963396576 c y_1^3 y_3^8 \\
+& 90957709080 y_1^5 y_3^8 + 45154416006 c y_3^9 + 25402919166 y_1^2 y_3^9 = 0.
\end{aligned}
\end{equation*}
From (4.56) and (4.57), eliminating $y_3$ we get a polynomial of $y_1$ with constant coefficients of degree 118 and then $y_1$ is a constant, that is, $\lambda_1$ is constant, which is a contradiction. Therefore we complete the proof of Theorem 1.1.

 \par
 \vskip 6pt
 \footnotesize{}

\end{document}